\documentclass[12pt,a4paper]{article}
\usepackage{amsfonts}
\usepackage{eurosym}
\usepackage{latexsym}
\usepackage{amsmath}
\usepackage{amssymb}
\usepackage[margin=.7in]{geometry}
\usepackage[english]{babel}
\usepackage{scalerel,stackengine}
\usepackage{extpfeil}
\usepackage{graphicx}
\usepackage{ntheorem}
\usepackage{tikz-cd}
\usepackage{MnSymbol}
\usepackage{stackengine}

\theoremseparator{.}

\stackMath
\newcommand\reallywidehat[1]{\savestack{\tmpbox}{\stretchto{  \scaleto{    \scalerel*[\widthof{\ensuremath{#1}}]{\kern-.6pt\bigwedge\kern-.6pt}    {\rule[-\textheight/2]{1ex}{\textheight}}  }{\textheight}}{0.5ex}}\stackon[1pt]{#1}{\tmpbox}}
\newtheorem{theorem}{Theorem}

\newtheorem{corollary}[theorem]{Corollary}

\newtheorem{lemma}[theorem]{Lemma}

\newtheorem{proposition}[theorem]{Proposition}
\newtheorem{remark}[theorem]{Remark}

\def\div{\operatorname{div}}

\def\sgn{\operatorname{sgn}}

\begin{document}
\makeatletter
\def\blfootnote{\xdef\@thefnmark{}\@footnotetext}
\makeatother

\date{}
\title{\textbf{Bourgain-Brezis spaces obtained by real interpolation}}
\author{ Eduard Curc\u a \thanks{%
Faculty of Mathematics, Informatics and Mechanics, University of Warsaw,
Banacha 2, Warsaw, 02-097, Poland} }
\maketitle

\begin{abstract}
\setlength{\parindent}{2cm}In 2002, Bourgain and Brezis proved that for the
space $X=W^{1,d}$ (on $\mathbb{T}^{d}$, with $d\geq2$) we have the equality
of images 
\begin{equation}
\div  (L^{\infty}\cap X)=\div  X,  \tag{$\ast$}
\end{equation}
i.e., given a vector field $v\in X$ there exists a vector field $u\in
L^{\infty }\cap X$ such that $\div  u=\div  v $.

In this paper we show that if $X$ is a function space satisfying ($\ast$)
then, any real interpolation space $X_{\theta,q}=(L^{\infty},X)_{\theta,q}$
(where $\theta\in (0,1)$ and $q\in [1,\infty)$) also satisfies ($\ast$). The
proof is based on a general method that allows us to interpolate solutions
of linear equations.
\end{abstract}

\makeatletter

\makeatother

\makeatletter

\makeatother

\makeatletter

\makeatother

\bigskip

\section{Introduction}

\blfootnote{Keywords: Divergence equation, Real interpolation, Sobolev spaces.}
\blfootnote{MSC 2020 classification: 35F05, 46B70, 46E35.}

Let $X$ be a Banach function space on $\mathbb{T}^{d}$, where $d\geq 2$. We
say that $X$ is a Bourgain-Brezis space ($BB$ space, for short) if for any
vector field $v\in X$ there exists another vector field $u\in L^{\infty
}\cap X$ such that we have 
\begin{equation}
\div u=\div v,  \label{div-00}
\end{equation}%
in the sense of distributions on $\mathbb{T}^{d}$. In short, we can express
this as the equality of images 
\begin{equation}
\div (L^{\infty }\cap X)=\div X.  \label{div-0}
\end{equation}

In other words, if $X$ is a $BB$ space, the divergence operator does not
make distinction between $X$ and the usually smaller space $L^{\infty }\cap
X $.

By using an involved construction Bourgain and Brezis (\cite{BBold}, 2002)
proved that the Sobolev space $X=W^{1,d}$ is a $BB$ space. They also provided a
much simpler argument (based on duality; see \cite[Section 4]{BBold}) for
the fact that $X=H^{1}=W^{1,2}$ is a $BB$ space, when $d=2$. Later, using a
simple duality argument, Mazya (\cite{Maz}, 2007) has shown that the Sobolev
space $X=H^{d/2}$ is a $BB$ space, for any $d\geq 2$. These simple arguments%
\footnote{%
There are also other places in literature where arguments similar to the one
of Mazya or the one of Bourgain and Brezis have been used; see for instance 
\cite[Remark 34]{CE-BB} and the references therein.} are, however, based on
the spectral structure of the spaces $H^{d/2}$ and do not seem to be
appropriate in the case of other types of spaces.

On the other hand, the constructive arguments of Bourgain and Brezis (see 
\cite{BBold} and \cite{BB}) have been generalized to other critical function
spaces like the Triebel-Lizorkin scale $F_{q}^{d/p,p}$ (see \cite{BMR} and 
\cite{BRWY}).

It remains the question whether we can find $BB$ spaces that are
\textquotedblleft not like $H^{d/2}$\textquotedblright\ (or spectral) in an
easy way, not by a repeating or modifying the constructions provided in \cite%
{BBold} and \cite{BB}. One attempt in this direction was made in \cite{CE-BB}
where we \textquotedblleft interpolated\textquotedblright\ the non-trivial
situation of the space $H^{d/2}$ and the trivial situation of the Besov
spaces $B_{1}^{d/p,p}$ (note that $B_{1}^{d/p,p}\hookrightarrow L^{\infty }$
and hence, $B_{1}^{d/p,p}$ is trivially a $BB$ space). Simplifying the
statement (and adapting it to the case of the torus), we have obtained (see 
\cite{CE-BB}):

\begin{theorem}
\label{DIV} If $p\in \lbrack 2,\infty )$ and $q\in (1,2)$, then 
\begin{equation}
\div  (L^{\infty }\cap B_{2}^{d/p,p})\hookleftarrow \div 
(B_{q}^{d/p,p}).  \label{div-DIV}
\end{equation}

In other words, given a vector field $v\in B_{q}^{d/p,p}$ there exists
another vector field $u\in L^{\infty }\cap B_{2}^{d/p,p}$ such that (\ref%
{div-00}) holds.
\end{theorem}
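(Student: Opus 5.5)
Theorem \ref{DIV} should follow by interpolating solutions of the divergence equation between the two ``endpoint'' situations. These are the nontrivial $BB$ property of $H^{d/2}$ (Mazya) and the trivial embedding $B_{1}^{d/p,p}\hookrightarrow L^{\infty}$. Concretely, I would use the real method in its $K$-functional / discrete $J$-decomposition form. Given $v\in B_{q}^{d/p,p}$, I would write $v=\sum_{j}v_{j}$ with $v_{j}$ spectrally localized at frequency $\sim 2^{j}$ (Littlewood--Paley blocks). Then $\Vert v\Vert_{B_{q}^{d/p,p}}\sim \Vert (2^{jd/p}\Vert v_{j}\Vert_{L^{p}})_j\Vert_{\ell^{q}}$.

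The first step is to split $v=a+b$ at a level depending on the size profile of the blocks. I would use the rearrangement idea behind $\ell^{q}=(\ell^{1},\ell^{2})_{\theta,q}$, with $1/q=(1-\theta)+\theta/2$. Choose a threshold $\lambda>0$. Let $a$ collect the blocks with large normalized size, $2^{jd/p}\Vert v_{j}\Vert_{L^{p}}>\lambda$, and let $b$ collect the rest. Since $q<2$, the small blocks form a sequence in $\ell^{2}$ and the large ones a sequence in $\ell^{1}$. Actually one needs a multiscale version: a whole family of thresholds $\lambda_{k}=2^{k}$, giving dyadic layers $v^{(k)}$ with
\[
\sum_k \Vert v^{(k)}\Vert_{B_{1}^{d/p,p}}^{\,\cdot}\ \text{and}\ \sum_k\Vert v^{(k)}\Vert_{B_{2}^{d/p,p}}^{\,\cdot}
\]
weighted appropriately, so that $\sum_{k}\min(\ldots)$ reproduces $\Vert v\Vert_{B_{q}}$.

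The second step is to handle each layer. For the $\ell^{2}$-type (small, many) part I would pass to $H^{d/2}$. Since $p\ge2$, and using the embedding $B_{2}^{d/p,p}\subset B^{d/2,2}_{2}$ on the torus only in the reverse direction, I instead need the $BB$ statement directly in $B_{2}^{d/p,p}$. I would get it by a duality argument of Mazya type. The estimate to prove is
\[
\Vert \nabla\varphi\Vert_{(L^{\infty}\cap B_{2}^{d/p,p})^{*}}\gtrsim \Vert\nabla\varphi\Vert_{(B_{2}^{d/p,p})^*}.
\]
This reduces to an $L^{1}$ estimate of $\nabla\varphi$ against the dual Besov norm $B_{2}^{-d/p,p'}$. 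I would prove it by a Gagliardo--Nirenberg/Van Schaftingen type inequality, for which the spectral Hilbert structure at $q=2$ is what makes it work. This produces, for the $b$-part of each layer, a field $u_{b}\in L^{\infty}\cap B_{2}^{d/p,p}$ with $\div u_{b}=\div b$ and bounds linear in $\Vert b\Vert_{B_{2}}$. For the $a$-part I simply take $u_{a}=a\in B_{1}^{d/p,p}\subset L^{\infty}\cap B_{2}^{d/p,p}$.

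The third step is to sum: $u=\sum_{k}(u_{a}^{(k)}+u_{b}^{(k)})$. The hard part, which I expect to be the main obstacle, is making the sum converge in $L^{\infty}$ rather than only in $B_{2}^{d/p,p}$. The $B_{2}$ norms sum by the $\ell^{q}\subset\ell^{2}$ inclusion. The $L^{\infty}$ bounds, however, come with constants proportional to $\Vert b^{(k)}\Vert_{B_{2}}$, and these need not be summable. To handle this I would choose the layer decomposition so that $\Vert b^{(k)}\Vert_{B_{2}}$ decays geometrically in $k$ with ratio controlled by $q<2$. Equivalently, I would use the strict inequality $q<2$, via H\"older on the layers, to gain a factor $2^{-\varepsilon k}$, so that $\sum_{k}\Vert u^{(k)}\Vert_{L^{\infty}}\lesssim\Vert v\Vert_{B_{q}^{d/p,p}}$. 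This is exactly where the restriction $q\in(1,2)$, rather than $q=2$, should be needed.
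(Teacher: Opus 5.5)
\textbf{There is a genuine gap, and it sits in your Step 2, which carries the entire proof.}

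Your first sentence names the right endpoints. But once you freeze $p$ and interpolate only in the third index, the nontrivial endpoint is no longer $H^{d/2}$; it becomes $B_2^{d/p,p}$, and you simply assert that $B_2^{d/p,p}$ ($p>2$) is a $BB$ space. If that were true, Theorem \ref{DIV} would follow in one line. Since $q<2$ gives $B_q^{d/p,p}\hookrightarrow B_2^{d/p,p}$, you could apply it to $v$ itself, with the open-mapping bound $\Vert u\Vert_{L^\infty\cap B_2^{d/p,p}}\lesssim\Vert v\Vert_{B_2^{d/p,p}}$. Consequently:
\begin{itemize}
\item The threshold splitting of Step 1 does no work; its large-block part is a finite sum, trivially in $B_1^{d/p,p}$.
\item The $L^\infty$-summation problem of Step 3 is an artifact of that splitting.
\item Your explanation of why $q<2$ is needed is spurious; the argument would work equally for $q=2$.
\end{itemize}

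Nor does a Mazya-type duality argument supply the claim. The structure Mazya exploits comes from integrability $p=2$: Plancherel turns the $H^{\pm d/2}$ norms into weighted sums of $|\widehat{f}(n)|^2$. It does not come from the third index being $2$. The dual norm of $B_2^{-d/p,p'}$, with $p'<2$, measures each Littlewood--Paley block in $L^{p'}$, so there is no Hilbert structure to use. The inequality $\Vert\nabla\varphi\Vert_{B_2^{-d/p,p'}}\lesssim\Vert g_0\Vert_{L^1}+\Vert g_1\Vert_{B_2^{-d/p,p'}}$ for $\nabla\varphi=g_0+g_1$ is just a restatement of the $BB$ property you want. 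Gesturing at Gagliardo--Nirenberg or Van Schaftingen estimates does not prove it. Moreover $B_2^{-d/p,p'}\hookrightarrow H^{-d/2}$, so Mazya's bound only controls a strictly weaker norm of $\nabla\varphi$. The paper itself stresses that no $BB$ space is involved in Theorem \ref{DIV}, and that the $B_2^{d/p,p}$ regularity of $u$ is a loss forced by the method. Your Step 2 would remove that loss entirely, so it cannot be a routine adaptation of Mazya.

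\textbf{The missing idea} is to let the integrability exponent move, so that the only nontrivial input is Mazya's theorem at $p=2$. As described in the paper, \cite{CE-BB} interpolates along the critical line $s=d/p$ between $H^{d/2}$ and a trivially-$BB$ space $B_1^{d/r,r}\hookrightarrow L^\infty$ with $r\neq 2$. It uses a complex-type ($\mathcal{W}$-) method:
\begin{itemize}
\item Represent $v=f(\theta)$ with $f$ analytic on the strip.
\item Solve the divergence equation separately on the two boundary lines: trivially (take $f(i\cdot)$ itself) on the $B_1^{d/r,r}$ side, and by Mazya, with a regular choice in $t$, on the $H^{d/2}$ side.
\item Glue the boundary solutions with the Cauchy integral, as in Lemma \ref{l1}.
\item Take $u$ to be the glued function evaluated at $\theta$. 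Then $\div u=\div v$, because $\div$ commutes with the Cauchy integral.
\end{itemize}
The Hilbert transform in this gluing needs UMD-type control. That is what costs the passage from third index $q$ to $2$ in the solution space, i.e.\ the asymmetry in (\ref{div-DIV}). Your fixed-$p$ scheme never reaches $H^{d/2}$, so it has no genuine nontrivial endpoint to interpolate from.
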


The proof of Theorem \ref{DIV} is based on a variant of the complex
interpolation method (the $\mathcal{W}$-method; see \cite[Section 3]{CE-BB}
). For technical reasons (see \cite[Section 5.2]{CE-BB}) we were forced to
give up some regularity in the \textquotedblleft third
parameter\textquotedblright : while the source is in $B_{q}^{d/p,p}$, with $%
q\in (1,2)$, the Besov regularity of the solution is given by the larger
space $B_{2}^{d/p,p}$. Since we do not have the same function space on both
sides of (\ref{div-DIV}), we can not speak here about a $BB$ space being involved.

In this paper we use a similar approach that is adapted to the real
interpolation method instead of the complex one. Our main result is the
following:

\begin{theorem}
\label{th.BB}Let $X$ be a $BB$ space on $\mathbb{T}^{d}$ such that $\left(
L^{\infty },X\right) $ is an interpolation couple, $L^{\infty }\cap X$ has a
separable predual and $X\hookrightarrow H^{-m}$, for some $m>d/2$. Then, for
any $\theta \in (0,1)$, $q\in \lbrack 1,\infty )$ the real interpolation
space $X_{\theta ,q}:=\left( L^{\infty },X\right) _{\theta ,q}$ is a $BB$
space.
\end{theorem}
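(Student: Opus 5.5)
The plan is to avoid building a nonlinear solution operator that is uniform in the interpolation parameter. Instead, I would split $v\in X_{\theta,q}$, using the $J$-method, into a ``low'' part that already lies in $L^{\infty}$ and a ``high'' part that lies in $X$. The Bourgain--Brezis property of $X$ is then applied only to the high part. The output $u$ must satisfy three things: $u\in L^{\infty}$, $u\in X_{\theta,q}$, and $\div u=\div v$.

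\textbf{Step 1 (representation).} Since $\theta\in(0,1)$ and $q\in[1,\infty)$, the equivalence theorem $(L^{\infty},X)_{\theta,q}=(L^{\infty},X)_{\theta,q;J}$ gives a representation of $v$. Namely, $v=\sum_{j\in\mathbb{Z}}v_j$ with convergence in $L^{\infty}+X$, where $v_j\in L^{\infty}\cap X$ and
\begin{equation*}
c_j:=2^{-j\theta}\max\bigl(\|v_j\|_{L^\infty},2^{j}\|v_j\|_{X}\bigr),\qquad (c_j)_{j\in\mathbb{Z}}\in\ell^{q},\quad \|(c_j)\|_{\ell^q}\lesssim\|v\|_{X_{\theta,q}}.
\end{equation*}

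\textbf{Step 2 (low part).} Set $a:=\sum_{j\le 0}v_j$. Since $\|v_j\|_{L^\infty}\le 2^{j\theta}c_j$ and $(2^{j\theta})_{j\le0}\in\ell^{q'}$, Hölder gives that the series converges absolutely in $L^{\infty}$. Hence $a\in L^{\infty}$.

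\textbf{Step 3 (high part).} Set $b:=\sum_{j>0}v_j$. Since $\|v_j\|_X\le 2^{j(\theta-1)}c_j$ with $\theta<1$, the series converges absolutely in $X$, so $b\in X$. Moreover, $b$ has a $J$-representation given by the truncated sequence $(v_j\mathbf 1_{j>0})$, whose $J$-weights are dominated by $(c_j)$. Hence $b\in X_{\theta,q}$. Consequently $a=v-b\in X_{\theta,q}$ as well, and therefore $a\in L^{\infty}\cap X_{\theta,q}$.

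\textbf{Step 4 (apply the BB property).} Because $X$ is a $BB$ space and $b\in X$, there is $w\in L^{\infty}\cap X$ with $\div w=\div b$. Since $L^{\infty}\cap X\hookrightarrow X_{\theta,q}$ (a standard inclusion for any interpolation couple), $w\in L^{\infty}\cap X_{\theta,q}$. Set $u:=a+w$. Then $u\in L^{\infty}\cap X_{\theta,q}$ and $\div u=\div a+\div b=\div v$.

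\textbf{Main obstacle.} The delicate point is justifying that the series manipulations are compatible with the distributional divergence, i.e. that $\div v=\div a+\div b$ in $\mathcal D'(\mathbb T^d)$. For this I would use $X\hookrightarrow H^{-m}$ together with $L^\infty\hookrightarrow L^2\hookrightarrow H^{-m}$. These give $L^{\infty}+X\hookrightarrow H^{-m}$, so convergence in $L^\infty+X$ implies convergence in $H^{-m}$. Since $\div$ is continuous $H^{-m}\to H^{-m-1}$, the identity passes to the limit.

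If one additionally wants a quantitative estimate $\|u\|_{L^\infty\cap X_{\theta,q}}\lesssim\|v\|_{X_{\theta,q}}$, a bounded (possibly nonlinear) choice $b\mapsto w$ is needed. I would obtain it from the open mapping theorem applied to $\div:L^{\infty}\cap X\to\div X$, with $\div X$ carrying the quotient norm. The hypothesis that $L^{\infty}\cap X$ has a separable predual would be used, via weak-$*$ compactness, to show that the image is closed and to select solutions with controlled norm. The existence statement of the theorem itself only needs Steps 1--4.
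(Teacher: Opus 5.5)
Your argument is correct, and it takes a genuinely different and much more elementary route than the paper's.

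\textbf{The paper's route.} The paper deduces Theorem~\ref{th.BB} from the abstract Theorem~\ref{th.int}, which it proves through Lindberg's Fourier-side ``complex'' description of the real method. For $b\in B_0\cap B_1$ it writes $b=v(\theta)$ with $v\in\widehat{\mathcal F}^q_{00}(B_0,B_1)$. It then solves $T\widehat u_1(\xi)=T\widehat v_1(\xi)$ frequency by frequency on the line $\Re z=1$ (after a regularization) and keeps $u_0=v_0$. Next it glues $u_0,u_1$ into an analytic $u$ by the Cauchy formula of Lemma~\ref{l1}, controls $u$ on $\partial S$ through the bounded multipliers $\chi_j,\widehat\rho_j$, and sets $a=u(\theta)$. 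Finally it passes to general $b$ by a $J$-decomposition and Banach--Alaoglu. That last step is where the separable preduals and the separability/reflexivity of $E$ are used.

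\textbf{Your route.} You use the $J$-decomposition only once, to split $v$ into a piece in $L^\infty\cap X_{\theta,q}$ plus a piece in $X$. You then invoke the $BB$ property of $X$ a single time. All you need is $L^\infty\cap X\hookrightarrow X_{\theta,q}$ and the fact that $L^\infty+X$ sits inside $H^{-m}$, so the separable-predual hypothesis on $L^\infty\cap X$ is never used.

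The same splitting also proves Theorem~\ref{th.int} itself, writing $J_A,J_B$ for the $J$-functionals of the two couples:
\begin{itemize}
\item Keep $\sum_{j\le 0}v_j$. It lies in $(A_0,A_1)_{\theta,q}$, because (i) gives $v_j\in A_0\cap A_1$ with $J_A(2^j,v_j)\lesssim J_B(2^j,v_j)$ for $j\le 0$.
\item Replace $\sum_{j>0}v_j\in B_1$ by the solution $w$ provided by (ii). It lies in $A_1=A_0\cap A_1\hookrightarrow(A_0,A_1)_{\theta,q}$, using (i) again.
\end{itemize}

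\textbf{Comparison.} The paper's construction interpolates solutions through analytic families, a method carried over from the complex-method argument it adapts. For the statements actually proved here, it gives nothing beyond your splitting and costs extra hypotheses.

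\textbf{Two minor points.} First, your ``main obstacle'' is not really one: $v=a+b$ already holds in $L^\infty+X\subset H^{-m}$, and $\div$ is linear. Second, the quantitative version needs no weak-$*$ compactness. Since $\ker(\div|_X)$ is closed in $X$, the $BB$ hypothesis says exactly that $L^\infty\cap X\to X/\ker(\div|_X)$ is a bounded surjection of Banach spaces. The open mapping theorem then gives the bound directly.
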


Theorem \ref{th.BB} gives us a method for constructing new $BB$ spaces from
old. In particular, combining Theorem \ref{th.BB} with the result of Mazya
that $H^{d/2}$ is a $BB$ space, we get the following result.

\begin{corollary}
\label{cor.BB}For any $\theta \in (0,1)$, $q\in \lbrack 1,\infty )$ the real
interpolation space $X_{\theta ,q}:=\left( L^{\infty },H^{d/2}\right)
_{\theta ,q}$ is a $BB$ space.
\end{corollary}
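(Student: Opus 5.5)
The corollary should follow by applying Theorem \ref{th.BB} with $X=H^{d/2}$. Mazya's result \cite{Maz} says that $H^{d/2}$ is a $BB$ space, so the plan is to check the three structural hypotheses of Theorem \ref{th.BB} for $X=H^{d/2}$ on $\mathbb{T}^{d}$. Two of them are immediate.

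First, $(L^{\infty},H^{d/2})$ is an interpolation couple. Both spaces embed continuously into the Hausdorff space $\mathcal{D}'(\mathbb{T}^{d})$: $L^{\infty}\hookrightarrow L^{2}$, and $H^{d/2}\hookrightarrow L^{2}$ holds trivially since $d/2>0$. Second, $H^{d/2}\hookrightarrow H^{-m}$ for any $m>d/2$. On the Fourier side this is the pointwise inequality $(1+|k|^{2})^{-m}\leq(1+|k|^{2})^{d/2}$.

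The remaining hypothesis, and the only one needing an argument, is that $L^{\infty}\cap H^{d/2}$ has a separable predual. I would take the predual to be $A_{0}+A_{1}$ with $A_{0}=L^{1}$ and $A_{1}=H^{-d/2}$. The couple is realized inside $\mathcal{D}'(\mathbb{T}^{d})$, or inside $H^{-m}$ for $m$ large.

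I would then invoke the classical duality theorem for compatible couples (Bergh--L\"ofstr\"om, Theorem 2.7.1). If $A_{0}\cap A_{1}$ is dense in both $A_{0}$ and $A_{1}$, then $(A_{0}+A_{1})^{*}=A_{0}^{*}\cap A_{1}^{*}$ with equivalent (in fact equal) norms. The density holds here because trigonometric polynomials lie in $L^{1}\cap H^{-d/2}$ and are dense in both $L^{1}$ and $H^{-d/2}$. The duals are $(L^{1})^{*}=L^{\infty}$ and $(H^{-d/2})^{*}=H^{d/2}$, both realized through the same pairing $\langle f,g\rangle=\sum_{k}\hat f(k)\overline{\hat g(k)}$ on trigonometric polynomials.

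The point I expect to need the most care is that these two identifications are compatible, so that the intersection of duals really is the function space $L^{\infty}\cap H^{d/2}$. Concretely, a functional on $L^{1}+H^{-d/2}$ restricts to elements $g_{0}\in L^{\infty}$ and $g_{1}\in H^{d/2}$. These agree on trigonometric polynomials, hence have the same Fourier coefficients, hence coincide as distributions. This identifies $(L^{1}+H^{-d/2})^{*}$ with $L^{\infty}\cap H^{d/2}$ with norm $\max(\|g\|_{\infty},\|g\|_{H^{d/2}})$.

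Separability of $L^{1}+H^{-d/2}$ follows because it is the image of the separable space $L^{1}\times H^{-d/2}$ under the continuous surjection $(a,b)\mapsto a+b$, so it is a quotient of a separable Banach space.

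With all hypotheses verified, Theorem \ref{th.BB} gives that $(L^{\infty},H^{d/2})_{\theta,q}$ is a $BB$ space for every $\theta\in(0,1)$ and $q\in[1,\infty)$.
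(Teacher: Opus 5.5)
Your proposal is correct and follows the paper's route: the corollary is obtained by applying Theorem~\ref{th.BB} with $X=H^{d/2}$, using Mazya's theorem that $H^{d/2}$ is a $BB$ space. The paper leaves the structural hypotheses implicit, and your check of them (in particular identifying $L^{1}+H^{-d/2}$ as a separable predual of $L^{\infty}\cap H^{d/2}$ via the duality theorem for compatible couples) is a sound way to fill that in.
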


Note that there is no loss of regularity here: the source belongs to $\left(
L^{\infty },H^{d/2}\right) _{\theta ,q}$ and the solution belongs to the
smaller space $L^{\infty }\cap \left( L^{\infty },H^{d/2}\right) _{\theta
,q} $. The result is also non-trivial in the sense that (see Section \ref%
{s.f.remarks2}) 
\begin{equation}
\left( L^{\infty },H^{d/2}\right) _{\theta ,q}\not\hookrightarrow L^{\infty
}.  \label{nontriv}
\end{equation}

However, while Corollary \ref{cor.BB} can be easily obtained by our methods
in this paper, it has one major drawback. Namely, the fact that we do not
have at this moment an easy way to describe the interpolation spaces like $%
\left( L^{\infty },H^{d/2}\right) _{\theta ,q}$ (other than the definition
provided, for instance, via the $K-$method). Concerning this, we have proved
in \cite[Theorem 1.4]{CE-1} (see also \cite[Theorem 2]{CE-S}) that the space 
$(L^{\infty },H^{d/2})_{\theta ,q}$ contains no Triebel-Lizorkin space $%
F_{\tau }^{\sigma ,p}$ where $\tau \in \lbrack 1,\infty ]$, $\sigma =\theta
d/2$ and $1/p=\theta /2$. If, independently to the result of Corollary \ref%
{cor.BB}, we restrict the attention to describing the pathological space $%
\left( L^{\infty },H^{d/2}\right) _{\theta ,q}$, one may ask whether such a
space appears naturally in practice. The present paper represents such an
instance\footnote{One can even say that the purpose of this paper is to draw attention to the
\textquotedblleft explicit computation\textquotedblright\ of the spaces like 
$(L^{\infty },H^{d/2})_{\theta ,q}$. See Section \ref{s.f.remarks2} and
Corollary \ref{cor.BB'}.}. Proving that a given function space is a $BB$
space is in general a non-trivial task. In contrast, the proof of Theorem \ref%
{th.BB} (and Corollary \ref{cor.BB}) is relatively cheap and natural.

\bigskip Theorem \ref{th.BB} is a direct consequence of the following
general functional analytic fact:

\begin{theorem}
\label{th.BB.g}Let $X_{0},X_{1},E,F$ be some non-trivial Banach spaces such
that $X_{0},X_{1}\hookrightarrow E$, the spaces $X_{0}$, $X_{0}\cap X_{1}$
have separable preduals and $E$ is separable and reflexive. Suppose that $%
T:E\rightarrow F$ is a bounded linear operator satisfying the equality 
\begin{equation*}
T(X_{0}\cap X_{1})=T(X_{1}).
\end{equation*}

Then, for any $\theta \in (0,1)$, $q\in \lbrack 1,\infty )$ we have 
\begin{equation*}
T(X_{0}\cap X_{\theta ,q})=T(X_{\theta ,q}),
\end{equation*}%
where $X_{\theta ,q}:=(X_{0},X_{1})_{\theta ,q}$.
\end{theorem}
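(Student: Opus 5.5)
We will prove that every $x\in X_{\theta,q}$ admits some $y\in X_{0}\cap X_{\theta,q}$ with $Ty=Tx$. The idea is to take the $K$-method (equivalently, discrete $J$-method) decomposition of $x$ and correct each piece separately, using the hypothesis $T(X_{0}\cap X_{1})=T(X_{1})$ in a quantitative form.

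\textbf{Step 1: quantitative solvability.} The hypothesis says that $T$ restricted to $X_{0}\cap X_{1}$ has the same range as $T$ restricted to $X_{1}$. Consider the map
\[
S:(X_{0}\cap X_{1})\times\ker_{X_{1}}T\to X_{1},\qquad S(y,z)=y+z .
\]
It is onto, where $\ker_{X_{1}}T$ is closed in $X_{1}$ because $X_{1}\hookrightarrow E$ and $T$ is bounded on $E$. By the open mapping theorem there is a constant $C$ with the following property: for every $x_{1}\in X_{1}$ there is $y\in X_{0}\cap X_{1}$ with
\[
Ty=Tx_{1},\qquad \|y\|_{X_{0}\cap X_{1}}\le C\|x_{1}\|_{X_{1}} .
\]
The correspondence $x_{1}\mapsto y$ need not be linear.

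\textbf{Step 2: decompose $x$.} Write $x=\sum_{j\in\mathbb Z}u_{j}$ in the discrete $J$-method. Then $u_{j}\in X_{0}\cap X_{1}$ and
\[
\big(2^{-j\theta}J(2^{j},u_{j})\big)_{j}\in\ell^{q},
\]
and the series converges in $X_{0}+X_{1}$. We split $x=a+b$ with $a=\sum_{j\le 0}u_{j}$ and $b=\sum_{j>0}u_{j}$.

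\emph{The piece $a$.} For $j\le0$ we have $\|u_{j}\|_{X_{0}}\le J(2^{j},u_{j})$, and this is summable with geometric weight $2^{j\theta}$. Hence $a\in X_{0}\cap X_{\theta,q}$, and $a$ needs no correction.

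\emph{The piece $b$.} For $j>0$ we have $\|u_{j}\|_{X_{1}}\le 2^{-j}J(2^{j},u_{j})$. Apply Step 1 to each $u_{j}$ and obtain $y_{j}\in X_{0}\cap X_{1}$ with
\[
Ty_{j}=Tu_{j},\qquad \|y_{j}\|_{X_{0}}+\|y_{j}\|_{X_{1}}\le C\|u_{j}\|_{X_{1}} .
\]
It follows that $J(2^{j},y_{j})\le C2^{j}\|u_{j}\|_{X_{1}}\le C\,J(2^{j},u_{j})$. Setting $y=a+\sum_{j>0}y_{j}$, the $J$-norm estimate puts $y$ in $X_{\theta,q}$. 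Moreover
\[
\sum_{j>0}\|y_{j}\|_{X_{0}}\le C\sum_{j>0}2^{-j}J(2^{j},u_{j})\le C\sum_{j>0}2^{-j(1-\theta)}\,2^{-j\theta}J(2^{j},u_{j})<\infty
\]
by Hölder, since $\theta<1$. So the series converges in $X_{0}$ and $y\in X_{0}\cap X_{\theta,q}$.

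\textbf{Step 3: identify $Ty$ with $Tx$.} Both series converge in $X_{0}+X_{1}\hookrightarrow E$, and $T$ is continuous on $E$. Hence
\[
Ty=Ta+\sum_{j>0}Ty_{j}=Ta+\sum_{j>0}Tu_{j}=Tx .
\]

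The main point of care is Step 1: making sure $\ker_{X_{1}}T$ is closed and that the open mapping theorem applies, which needs only $X_{1}\hookrightarrow E$ and completeness. If this went through as described, the predual and reflexivity hypotheses would not be needed at all. I suspect they enter in the author's proof because there a bounded selection or a weak-* compactness argument is used instead of a direct $J$-decomposition. I also expect some subtlety in the equivalence of the $K$- and $J$-methods when $X_{0}\cap X_{1}$ is not dense; the fundamental lemma still gives the equivalence for $q<\infty$, which is exactly why $q=\infty$ is excluded. Should that equivalence fail, I would fall back on a weak-* limit argument using the separable preduals and the reflexivity of $E$.
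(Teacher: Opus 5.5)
Your argument is correct, and it follows a genuinely different and much more elementary route than the paper.

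\textbf{The paper's route.} The paper obtains the statement as the special case $A_0=B_0=X_0$, $A_1=X_0\cap X_1$, $B_1=X_1$ of Theorem~\ref{th.int}. That proof rests on Lindstr\"om's complex-analytic description of the real method. For $b\in B_0\cap B_1$ it takes $v\in\widehat{\mathcal F}^q_{00}(B_0,B_1)$ with $v(\theta)=b$. It then solves $T\widehat u_1(\xi)=T\widehat v_1(\xi)$ pointwise on the Fourier side of the line $\Re z=1$, after a regularization and selection step. It keeps $u_0=v_0$ and glues $u_0,u_1$ by the Cauchy formula of Lemma~\ref{l1}. Finally it passes from $B_0\cap B_1$ to the whole interpolation space by a $J$-decomposition and a Banach--Alaoglu limit. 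The separable preduals and the reflexivity of $E$ are used for this compactness step. The restriction $q<\infty$ is used for density of $B_0\cap B_1$ and for the identification with $(\cdot,\cdot)^c_{\theta,q}$.

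\textbf{Your route.} You split the discrete $J$-decomposition at $j=0$ and keep the pieces with $j\le 0$, which are already absolutely summable in $X_0$. Each piece with $j>0$ has $J(2^j,\cdot)$ controlled by its $X_1$-norm, and you replace it by the bounded nonlinear selection from Step~1. This gains three things:
\begin{itemize}
\item none of the predual, separability or reflexivity hypotheses are needed;
\item the same splitting proves Theorem~\ref{th.int} verbatim: for $j\le 0$, $B_0\hookrightarrow A_0$ and $A_0\cap B_1\hookrightarrow A_1$ give $J_A(2^j,b_j)\lesssim J_B(2^j,b_j)$, and for $j>0$ use (ii) together with $A_1\hookrightarrow A_0$;
\item your $y$ even lies in $(X_0,X_0\cap X_1)_{\theta,q}$, exactly as in the paper.
\end{itemize}
The paper's approach is mainly of structural value: it carries the boundary-gluing scheme of its complex-method predecessor over to the real method.

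\textbf{Two small corrections.}
\begin{itemize}
\item The equivalence theorem $K=J$ (Bergh--L\"ofstr\"om) holds for all $\theta\in(0,1)$ and all $q\in[1,\infty]$, with no density assumption. So your explanation of why $q=\infty$ is excluded is inaccurate. In fact your argument also covers $q=\infty$, and the weak-$*$ fallback is unnecessary.
\item The paper's notion of equality of images includes a norm bound, so record that your estimates give
$\|y\|_{X_0}+\|y\|_{X_{\theta,q}}\lesssim\|(2^{-j\theta}J(2^j,u_j))_j\|_{\ell^q}\lesssim\|x\|_{X_{\theta,q}}$,
not merely finiteness. Alternatively, deduce the bound from the set equality by the open-mapping argument of your Step~1.
\end{itemize}
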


Theorem \ref{th.BB} is obtained directly from Theorem \ref{th.BB.g} by
setting $X_{0}=L^{\infty }$ and $X_{1}=X$. In turn, Theorem \ref{th.BB.g}
follows directly from the more general Theorem \ref{th.int} below. The
strategy used in the proof of Theorem \ref{th.int} is essentially the same
as the one used in the proof of Lemma 29 in \cite{CE-BB}, and we are using
here facts that were proved in \cite[Section 3.3]{CE-BB}. However, there are
some adaptations to be made; in particular, the proof of Theorem \ref{th.int}
is slightly simpler than its analogue, Lemma 29 in \cite{CE-BB}. As in \cite[%
Section 3.3]{CE-BB} we construct solutions by \textquotedblleft gluing
together\textquotedblright\ the solutions that we get on the two components of the boundary of
the standard strip. This is possible thanks to the fact proved in \cite[%
Section 3]{Lind} that the real interpolation method admits a formulation
that resembles the one of the complex interpolation method. Roughly speaking, the real
method is a complex method seen on the \textquotedblleft other
side\textquotedblright\ of the Fourier transform. In this view, we implement
the strategy of \cite[Section 3.3]{CE-BB} on the \textquotedblleft other
side\textquotedblright\ of the Fourier transform, getting rid in this way of
some problems related to the finer genuine complex interpolation variant.

\bigskip

\noindent \textbf{Notation and conventions.} Throughout the paper we use
mainly standard notation concerning the Sobolev, Triebel-Lizorkin and Besov
spaces on $\mathbb{T}^{d}$ as in \cite{S-T} (see also Section 2 of \cite{BMR}%
). For the notation and the methods related to standard interpolation theory
we refer to \cite{LB}.

We use the symbols $\lesssim $ and $\sim $ as follows. For two non-negative
variable quantities $A$ and $B$ we write $A\lesssim B$ if there exists a
constant $C>0$ such that $A\leq CB$. If $A\lesssim B$ and $B\lesssim A$,
then we write $A\sim B$.

When $X$ is a function space on $\mathbb{T}^{d}$ and $u=(u_{1},...,u_{d})$
is a vector field on $\mathbb{T}^{d}$ where each $u_{j}$ belongs to $X$, we
write $u\in X$ instead of $u\in X\times ...\times X=X^{d}$. Also, we denote
by $X_{\sharp }$ the space of those distributions $f\in X$, for which $%
\widehat{f}(0)=0$. In this paper all the function spaces will be considered
on $\mathbb{T}^{d}$ (even when we quote results that where initially
obtained in the case of $\mathbb{R}^{d}$, since transferring these results
from $\mathbb{R}^{d}$ to $\mathbb{T}^{d}$ is an easy task in general).
\medskip

\textit{Embeddings of images. }Suppose $A$, $B$, $F$ are Banach spaces and $%
T_{1}:A\rightarrow F$, $T_{2}:B\rightarrow F$ are two linear operators. In
what follows we write 
\begin{equation}
T_{2}(B)\hookrightarrow T_{1}(A),  \label{Images}
\end{equation}
if there exists some constant $C>0$ such that for any $b\in B$ there exists
some $a\in A$ such that $T_{1}a=T_{2}b$, in $F$, and $\lVert a\rVert
_{A}\leq C\lVert b\rVert _{B}$. We write 
\begin{equation*}
T_{2}(B)= T_{1}(A),
\end{equation*}
when we have both (\ref{Images}) and 
\begin{equation*}
T_{1}(A)\hookrightarrow T_{2}(B).
\end{equation*}

\textit{Weighted spaces.} Given $q\in[1,\infty)$, a Banach space $Z$ and a measurable function $\omega: \mathbb{R}\rightarrow Z$ we denote by $L^{q}(\mathbb{R},Z)$ the space of all strongly measurable functions $f:\mathbb{R}\rightarrow Z$ for which the norm 
\begin{equation*}
\Vert f \Vert_{L^{q}(\mathbb{R},Z)}:= \left( \int_{\mathbb{R}} \Vert f(t) \Vert_{Z} ^{q} \omega (t)dt \right)^{1/q},
\end{equation*}
is finite.
\medskip

Throughout the paper the dimension $d$ will be always assumed to be (an integer) at least $2$, unless otherwise is specified.  
\section*{Acknowledgements}

This work was supported by the National Science Centre, Poland, CEUS
programme, project no. 2020/02/Y ST1/00072.

\section{Real interpolation of solutions}

\subsection{Real interpolation as a complex interpolation}

\label{sec.C} In order to prove our interpolation result, it is convenient to
use a description of the real interpolation method via complex analysis. In order to present such a description\footnote{Besides \cite{Lind} there are several instances in the literature where analytic functions were used in the context of the real interpolation (see the discussion and the references in \cite{Lind}).}, we follow
with unessential changes the description provided  in \cite{Lind}. Let $S$ be the standard open
strip 
\begin{equation*}
S:=\{z\in \mathbb{C}\mid 0<\Re z<1\},
\end{equation*}%
and let $\overline{S}$ be its closure.

To a Banach space $A$ we associate the linear space $\mathcal{H}^{1}(%
\overline{S},A)$ of all continuous functions $f:\overline{S}\rightarrow A$
that are analytic in $S$ and such that 
\begin{equation*}
\sup_{x\in \lbrack 0,1]}\Vert f(x+i\cdot )\Vert _{L^{1}(\mathbb{R}%
,A)}<\infty .
\end{equation*}

Consider a compatible couple $\left( A_{0},A_{1}\right) $ of Banach spaces
and a parameter $q\in \lbrack 1,\infty ]$. Let $\widehat{\mathcal{F}}^{q}=%
\widehat{\mathcal{F}}^{q}(A_{0},A_{1})$ be the linear space of all functions 
$f\in \mathcal{H}^{1}(\overline{S},A_{0}+A_{1})$, such that $f(j+it)\in
A_{j} $ for any $j=0,1$ and any $t\in \mathbb{R}$, for which the quantity
\begin{equation}
\lVert f\rVert _{\widehat{\mathcal{F}}^{q}}:=\max_{j=0,1}\Vert \widehat{%
f(j+i\cdot )}\Vert _{L^{q}(\mathbb{R},A_{j})}.  \label{W1}
\end{equation}
is finite. One can see that this quantity defines a norm on   $\widehat{\mathcal{F}}^{q}$.

For $0<\theta <1$ we consider on $A_{0}\cap A_{1}$ the norm given by 
\begin{equation}
\lVert a\rVert _{\theta ,q}^{c}:=\inf \left\{ \lVert f\rVert _{\widehat{%
\mathcal{F}}^{q}}\mid a=f(\theta )\text{, }f\in \widehat{\mathcal{F}}%
^{q}\left( A_{0},A_{1}\right) \right\} \text{,}  \label{W-def}
\end{equation}%
for any $a\in A_{0}\cap A_{1}$, and we denote by $(A_{0},A_{1})_{\theta
,q}^{c}$ the space obtained by completing $A_{0}\cap A_{1}$ with respect to
this norm. By by considering analytic functions of the form $f(z):=\exp
((z-\theta )^{2})a$ one can easily see that $\lVert a\rVert _{\theta
,q}^{c}<\infty $, for any $a\in A_{0}\cap A_{1}$.

Note that, if $q<\infty $, the space $A_{0}\cap A_{1}$ is dense in the usual
real interpolation space $(A_{0},A_{1})_{\theta ,q}$ (see \cite[Theorem 3.4.2%
]{LB}). In this case, \cite[Theorem 1.1]{Lind} implies that 
\begin{equation}
(A_{0},A_{1})_{\theta ,q}=(A_{0},A_{1})_{\theta ,q}^{c},  \label{C-0}
\end{equation}%
with equivalence of norms.

\bigskip

In what follows it will be convenient to work with functions that are more
regular on the boundary of the standard strip and that are close to be
scalar. For this purpose we give the following lemma.

\begin{lemma}
\label{lem.dens}Let $\left( A_{0},A_{1}\right) $ be a compatible couple of
Banach spaces and fix a parameter $q\in \lbrack 1,\infty )$. Define the
subspace $\widehat{\mathcal{F}}_{0}^{q}(A_{0},A_{1})$ of $\widehat{\mathcal{F%
}}^{q}(A_{0},A_{1})$ to be the set of all elements of the form 
\begin{equation*}
\sum_{l=1}^{N}a_{n}f_{n},
\end{equation*}%
where $N\in \mathbb{N}^{\ast }$, each element $a_{n}$ belongs to $A_{0}\cap
A_{1}$ and each scalar function $f_{n}\in \mathcal{H}^{1}(\overline{S},%
\mathbb{C})$ is smooth and bounded on $\partial S$.

We have that $\widehat{\mathcal{F}}_{0}^{q}(A_{0},A_{1})$ is dense in $%
\widehat{\mathcal{F}}^{q}(A_{0},A_{1})$.
\end{lemma}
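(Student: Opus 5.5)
We approximate a given $f\in\widehat{\mathcal{F}}^{q}(A_{0},A_{1})$ in three successive steps: a convolution in the imaginary direction to regularize the Fourier side, a truncation of the Fourier transforms, and a density step to pass from $A_{j}$-valued to $A_{0}\cap A_{1}$-valued data. Throughout we work on the boundary lines $\Re z=j$, where the norm (\ref{W1}) lives, and reconstruct analytic functions on $S$ from their boundary data. We write $g_{j}:=\widehat{f(j+i\cdot)}$; by (\ref{W1}) this is an element of $L^{q}(\mathbb{R},A_{j})$.

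\textbf{Step 1: regularization.} Fix a Schwartz function $\varphi$ on $\mathbb{R}$ with $\varphi(0)=1$ and $\widehat{\varphi}$ compactly supported, and set $\varphi_{\epsilon}(t)=\varphi(\epsilon t)$. Since $f$ is analytic with $L^{1}$ boundary traces, $f$ is the Poisson integral of its boundary values. Multiplying $f(j+i\cdot)$ by $\varphi_{\epsilon}$ would destroy analyticity, so instead we convolve along vertical lines with an approximate identity $\rho_{\delta}$:
\begin{equation*}
f_{\delta}(z)=\int_{\mathbb{R}} f(z+is)\rho_{\delta}(s)\,ds .
\end{equation*}
This keeps analyticity and the $\mathcal{H}^{1}$ property. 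On the Fourier side it multiplies each $g_{j}$ by the bounded function $\widehat{\rho_{\delta}}$, which tends to $1$ pointwise. Dominated convergence in $L^{q}(\mathbb{R},A_{j})$, which uses $q<\infty$, gives $f_{\delta}\to f$ in $\widehat{\mathcal{F}}^{q}$.

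\textbf{Step 2: truncation in frequency.} If $\widehat{\rho}$ is compactly supported, then the boundary transforms $g_{j}$ become compactly supported in frequency. Because $f$ is analytic in the strip, $g_{1}(\xi)=e^{2\pi\xi}g_{0}(\xi)$ (up to the normalization of the Fourier transform), so both boundary values are encoded by a single compactly supported $A_{0}\cap A_{1}$-type object. More precisely, $g_{0}(\xi)\in A_{0}$ and $e^{2\pi\xi}g_{0}(\xi)\in A_{1}$, so on the compact support $g_{0}(\xi)\in A_{0}\cap A_{1}$, and $g_{0}\in L^{q}(K,A_{0}\cap A_{1})$. We then approximate $g_{0}$ in $L^{q}(K,A_{0}\cap A_{1})$ by simple functions $\sum_{n}a_{n}\chi_{E_{n}}$, or better by $\sum_{n}a_{n}\psi_{n}$ with scalar smooth compactly supported $\psi_{n}$. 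Since $e^{2\pi\xi}$ is bounded on $K$, this controls both boundary norms. The corresponding functions $f_{n}(z)=\int \psi_{n}(\xi)e^{2\pi z\xi}\,d\xi$ (with the appropriate sign convention) are entire and rapidly decreasing on vertical lines. They are therefore in $\mathcal{H}^{1}(\overline{S},\mathbb{C})$, smooth, and bounded on $\partial S$. This produces elements of $\widehat{\mathcal{F}}_{0}^{q}$ converging to $f_{\delta}$.

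\textbf{Main obstacle.} The delicate point is Step 2: justifying the identity $g_{1}=e^{2\pi\xi}g_{0}$ for vector-valued $\mathcal{H}^{1}$ functions in $A_{0}+A_{1}$, and concluding that $g_{0}(\xi)$ lies in $A_{0}\cap A_{1}$ with measurability in that space. The plan is to prove the identity by shifting the contour in $\int f(x+it)e^{-2\pi i t\xi}\,dt$ from $x=0$ to $x=1$, using Cauchy's theorem together with the uniform $L^{1}$ bound to kill the horizontal segments along a sequence $|t|\to\infty$. Strong measurability in $A_{0}\cap A_{1}$ then follows because $\xi\mapsto(g_{0}(\xi),g_{1}(\xi))$ is measurable into $A_{0}\oplus A_{1}$.
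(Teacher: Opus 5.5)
Your argument is correct, and it takes a genuinely different route from the paper.

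\textbf{The paper's route.} It first multiplies $f$ by $e^{\delta z^{2}}$ and convolves in the imaginary direction, which places $f$ in Calder\'on's space $\mathcal{F}(A_0,A_1)$. It then quotes Lemma 4.2.3 of Bergh--L\"ofstr\"om to approximate $f$ uniformly on $\partial S$ by $A_0\cap A_1$-valued finite sums. Finally it converts uniform convergence on $\partial S$ into $\widehat{\mathcal{F}}^{q}$-convergence. The Gaussian factor turns sup-norm smallness into $L^{1}$-smallness in $t$, hence $L^{\infty}$-smallness of the boundary Fourier transforms. The convolution then supplies the $L^{q}$-integrable multiplier $\widehat{\varphi}(\varepsilon\cdot)$.

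\textbf{Your route.} You stay entirely on the Fourier side. The contour shift gives $\widehat{f(1+i\cdot)}=e^{2\pi\xi}\,\widehat{f(i\cdot)}$ for every $\xi$. Your justification for discarding the horizontal segments is sound: by Tonelli, $t\mapsto\int_0^1\|f(x+it)\|\,dx$ is integrable, so it tends to zero along suitable sequences $t\to\pm\infty$. Hence the whole boundary datum is a single $A_0\cap A_1$-valued function that is locally in $L^{q}$. Density of $C_c^\infty\otimes(A_0\cap A_1)$ in $L^{q}(K,A_0\cap A_1)$ then finishes the proof. Your approximants are explicit entire functions, namely Fourier--Laplace transforms of $\sum a_n\psi_n$, and they are Schwartz on vertical lines.

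\textbf{What each buys.} Your approach is self-contained and does not need the classical density lemma. It also makes transparent that $\|\cdot\|_{\widehat{\mathcal{F}}^{q}}$ is a weighted $L^{q}$-norm of one $A_0\cap A_1$-valued function of $\xi$, essentially a continuous $J$-method. That is exactly the ``complex method on the other side of the Fourier transform'' heuristic the paper invokes. The paper's route is shorter on the page because it outsources the approximation step to a standard lemma, at the cost of the extra Gaussian and convolution bookkeeping.

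\textbf{Minor corrections.}
\begin{itemize}
\item For strong measurability into $A_0\cap A_1$, the map into the diagonal of $A_0\oplus A_1$ should be $\xi\mapsto(g_0(\xi),e^{-2\pi\xi}g_1(\xi))$, not $(g_0(\xi),g_1(\xi))$.
\item The membership $g_0(\xi)\in A_0\cap A_1$ holds for a.e.\ $\xi$. Compact support is needed only for the norm bound, not for membership.
\item Step 1 is dispensable, since the norm only sees boundary Fourier transforms. A sharp cutoff $\chi_{[-R,R]}g_0$, followed by approximation in $L^{q}([-R-1,R+1],A_0\cap A_1)$ with accuracy $\epsilon e^{-2\pi(R+1)}$, already works. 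Keeping it does have the merit that the intermediate function is itself in $\widehat{\mathcal{F}}^{q}$.
\item You rightly insist on smooth $\psi_n$ rather than indicators. The Fourier--Laplace transform of an indicator is not integrable on vertical lines, so it would not lie in $\mathcal{H}^{1}(\overline{S},\mathbb{C})$.
\end{itemize}
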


\noindent\textbf{Proof.} Let us first make some standard remarks. For some $%
\delta >0$ consider the function $g_{\delta }(z):=\exp (\delta z^{2})$. By a
direct computation we see that $g_{\delta }f\in \widehat{\mathcal{F}}%
^{q}(A_{0},A_{1})$, whenever $f\in \widehat{\mathcal{F}}^{q}(A_{0},A_{1})$,
and 
\begin{equation}
\left\Vert f-g_{\delta }f\right\Vert _{\widehat{\mathcal{F}}^{q}}\rightarrow
0,  \label{dens-0}
\end{equation}%
when $\delta \rightarrow 0$.

Also, fix a non-negative function $\varphi \in C_{c}^{\infty }(\mathbb{R)}$
of integral $1$, and for $\varepsilon >0$ denote by $\varphi _{\varepsilon }$
the function $\varepsilon ^{-1}\varphi (\varepsilon ^{-1}\cdot )$. For $f\in 
\widehat{\mathcal{F}}^{q}(A_{0},A_{1})$ the convolution $f\ast \varphi
_{\varepsilon }\in \widehat{\mathcal{F}}^{q}(A_{0},A_{1})$ is defined as in 
\cite{CE-BB}, i.e., 
\begin{equation*}
f\ast \varphi _{\varepsilon }(x+iy):=\int_{\mathbb{R}}f(x+i(y-t))\varphi
_{\varepsilon }(t)dt,
\end{equation*}%
for any $x\in \lbrack 0,1]$, $y\in \mathbb{R}$.

Note that 
\begin{equation}
\left\Vert f-f\ast \varphi _{\varepsilon }\right\Vert _{\widehat{\mathcal{F}}%
^{q}}=\max_{j=0,1}\Vert |\widehat{\varphi }(\varepsilon \cdot )-1|\widehat{%
f(j+i\cdot )}\Vert _{L^{q}(\mathbb{R},A_{j})}\rightarrow 0,  \label{dens-1}
\end{equation}%
when $\varepsilon \rightarrow 0$, by the dominated convergence theorem.
Hence, one can replace $f$ \ by a function of the form $f\ast \varphi
_{\varepsilon }$ which in particular, belongs to $\mathcal{F}(A_{0},A_{1})$
(see \cite[Section 4.1]{LB}). Thanks to Lemma 4.2.3 in \cite{LB} there
exists a sequence $(f_{N})_{N\geq 1}$ of functions in $\widehat{\mathcal{F}}%
_{0}^{q}(A_{0},A_{1})$ such that 
\begin{equation*}
\max_{j=0,1}\sup_{t\in \mathbb{R}}\left\Vert f(j+it)-f_{N}(j+it)\right\Vert
_{A_{j}}\rightarrow 0,
\end{equation*}
when $N\rightarrow \infty $. This shows that, for any $\delta >0$, we have 
\begin{equation*}
\max_{j=0,1}\left\Vert ((g_{\delta }f-g_{\delta }f_{N})(j+i\cdot ))^{\wedge
}(\xi )\right\Vert _{A_{j}}\lesssim \max_{j=0,1}\int_{\mathbb{R}}e^{-\delta
t^{2}}\left\Vert f(j+it)-f_{N}(j+it)\right\Vert _{A_{j}}dt\rightarrow 0,
\end{equation*}%
when $N\rightarrow \infty $, uniformly $\xi \in \mathbb{R}$. Hence, 
\begin{eqnarray}
\left\Vert (g_{\delta }f-g_{\delta }f_{N})\ast \varphi _{\varepsilon
}\right\Vert _{\widehat{\mathcal{F}}^{q}} &=&\max_{j=0,1}\Vert \widehat{%
\varphi }(\varepsilon \cdot )((g_{\delta }f-g_{\delta }f_{N})(j+i\cdot
))^{\wedge }\Vert _{L^{q}(\mathbb{R},A_{j})}  \notag \\
&\leq &\Vert \widehat{\varphi }(\varepsilon \cdot )\Vert _{L^{q}(\mathbb{R}%
)}\max_{j=0,1}\Vert ((g_{\delta }f-g_{\delta }f_{N})(j+i\cdot ))^{\wedge
}\Vert _{L^{\infty }(\mathbb{R},A_{j})}\rightarrow 0,  \label{dens-3}
\end{eqnarray}%
when $N\rightarrow \infty $, for any fixed $\varepsilon >0$.

Combining (\ref{dens-1}), (\ref{dens-3}) and the inequality%
\begin{equation*}
\left\Vert g_{\delta }f-(g_{\delta }f_{N})\ast \varphi _{\varepsilon
}\right\Vert _{\widehat{\mathcal{F}}^{q}}\leq \left\Vert g_{\delta
}f-(g_{\delta }f)\ast \varphi _{\varepsilon }\right\Vert _{\widehat{\mathcal{%
F}}^{q}}+\left\Vert (g_{\delta }f-g_{\delta }f_{N})\ast \varphi
_{\varepsilon }\right\Vert _{\widehat{\mathcal{F}}^{q}},
\end{equation*}%
we get that $g_{\delta }f$ can be approximated by elements in $\widehat{%
\mathcal{F}}_{0}^{q}(A_{0},A_{1})$. This and (\ref{dens-0}) concludes the
proof. \hfill $\square $

\bigskip

Consider the subspace $\widehat{\mathcal{F}}_{00}^{q}(A_{0},A_{1})$ of $%
\widehat{\mathcal{F}}_{0}^{q}(A_{0},A_{1})$ to be the set of all elements of
the form 
\begin{equation*}
\sum_{l=1}^{N}a_{n}f_{n},
\end{equation*}%
where $N\in \mathbb{N}^{\ast }$, each element $a_{n}$ belongs to $A_{0}\cap
A_{1}$, each function $f_{n}\in \mathcal{H}^{1}(\overline{S},\mathbb{C})$ is
such that $t\rightarrow f_{n}(j+it)$, $j=0,1$, are Schwartz on $\mathbb{R}$
and belong to $L^{2}(\exp \left( 4t ^{2}\right) ,\mathbb{C})$. As we already
noticed after (\ref{W-def}), any $a\in A_{0}\cap A_{1}$ can be written as $%
a=f(\theta )$ for some $f\in \widehat{\mathcal{F}}_{00}^{q}\left(
A_{0},A_{1}\right) $. On $A_{0}\cap A_{1}$ we introduce the norm

\begin{equation*}
\lVert a\rVert _{C_{\theta }^{q}}:=\inf \left\{ \lVert f\rVert _{\widehat{%
\mathcal{F}}^{q}}\mid a=f(\theta )\text{, }f\in \widehat{\mathcal{F}}%
_{00}^{q}\left( A_{0},A_{1}\right) \right\} \text{.}
\end{equation*}

Note that if $f\in \widehat{\mathcal{F}}_{0}^{q}(A_{0},A_{1})$ is such that $%
f(\theta )=a$, then $\widetilde{f}:=\exp (8(z-\theta )^{2})f\in \widehat{%
\mathcal{F}}_{00}^{q}(A_{0},A_{1})$ and $\widetilde{f}(\theta )=a$. This
observation together with Lemma \ref{lem.dens} leads to the following fact.

\begin{lemma}
\label{lem.d}The completion of $A_{0}\cap A_{1}$ with respect to the norm $%
\lVert \cdot \rVert _{C_{\theta }^{q}}$ is the space $(A_{0},A_{1})_{\theta
,q}^{c}$ (with equivalent norms).
\end{lemma}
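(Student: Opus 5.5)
The claim is that $\lVert \cdot \rVert_{C_\theta^q}$ and $\lVert \cdot \rVert_{\theta,q}^c$ are equivalent norms on $A_0\cap A_1$. Two norms that are equivalent on a common dense set have the same completion, so equivalence is all we need. Since $\widehat{\mathcal{F}}_{00}^q\subset \widehat{\mathcal{F}}^q$, the infimum defining $\lVert a\rVert_{C_\theta^q}$ runs over a smaller set, which gives $\lVert a\rVert_{\theta,q}^c\le \lVert a\rVert_{C_\theta^q}$ immediately. The real work is the reverse inequality $\lVert a\rVert_{C_\theta^q}\lesssim \lVert a\rVert_{\theta,q}^c$.

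Fix $a\in A_0\cap A_1$ and $f\in\widehat{\mathcal{F}}^q$ with $f(\theta)=a$. By Lemma \ref{lem.dens} choose $f_N\in\widehat{\mathcal{F}}_0^q$ with $\lVert f-f_N\rVert_{\widehat{\mathcal{F}}^q}\to0$. The values $f_N(\theta)$ need not equal $a$, so we correct them. We need that point evaluation at $\theta$ is continuous from $\widehat{\mathcal{F}}^q$ into $A_0+A_1$. To see this, use the Poisson integral representation of $f(\theta)$ through the boundary values $f(j+it)$. The Poisson kernel $P_j(\theta,\cdot)$ is integrable on $\mathbb{R}$, and we would also like its Fourier transform to lie in $L^{q'}$. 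Then Plancherel/Parseval gives
$$\lVert f(\theta)\rVert_{A_0+A_1}\lesssim \max_j \lVert \widehat{P_j(\theta,\cdot)}\rVert_{L^{q'}}\,\lVert \widehat{f(j+i\cdot)}\rVert_{L^q(\mathbb{R},A_j)}.$$
The kernel transforms are explicit, hyperbolic-type functions decaying exponentially, so they lie in every $L^{q'}$. Hence $b_N:=a-f_N(\theta)\to0$ in $A_0+A_1$.

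The difficulty is that $b_N\in A_0\cap A_1$, but we need it small in a norm that controls the correction. Here I would use the theorem that $\lVert\cdot\rVert_{\theta,q}^c$ is equivalent to the real-interpolation norm, which is (\ref{C-0}) from \cite{Lind}. Then $\lVert b_N\rVert_{\theta,q}^c\le \lVert f-f_N\rVert_{\widehat{\mathcal{F}}^q}$, because $f-f_N$ has value $b_N$ at $\theta$.

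The correction itself uses $h(z):=e^{(z-\theta)^2}$. This gives the bound $\lVert b\rVert_{C_\theta^q}\le \lVert h b\rVert_{\widehat{\mathcal{F}}^q}\lesssim \lVert b\rVert_{A_0\cap A_1}$, which is too weak. This is the main obstacle: an estimate in $A_0\cap A_1$ does not follow from smallness in $\lVert\cdot\rVert_{\theta,q}^c$. I would sidestep it by working through the function class instead of point values. Replace $f_N$ by
$$\widetilde f_N:=e^{8(z-\theta)^2}f_N+\frac{e^{8(z-\theta)^2}}{\phi(\theta)}\,\phi\,(f-f_N),$$
where $\phi$ is a fixed nice scalar function. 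The trouble is that the second term is not in $\widehat{\mathcal{F}}_{00}^q$. So instead I would argue in two steps.

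First, for $g\in\widehat{\mathcal{F}}_0^q$, the remark before the lemma gives $\lVert g(\theta)\rVert_{C_\theta^q}\le \lVert e^{8(z-\theta)^2}g\rVert_{\widehat{\mathcal{F}}^q}\le C\lVert g\rVert_{\widehat{\mathcal{F}}^q}$. The constant $C$ comes from the operator norm of multiplication by $e^{8(z-\theta)^2}$ on boundary data. On the Fourier side this multiplication is convolution with a Gaussian, which is bounded on $L^q$ by Young's inequality.

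Second, we need the value at $\theta$ to be exactly $a$. The sequence $(f_N(\theta))$ is Cauchy for $\lVert\cdot\rVert_{C_\theta^q}$, since $\lVert f_N(\theta)-f_M(\theta)\rVert_{C_\theta^q}\le C\lVert f_N-f_M\rVert_{\widehat{\mathcal{F}}^q}$. It converges to $a$ in $A_0+A_1$, and $\lVert\cdot\rVert_{C_\theta^q}$ dominates the $A_0+A_1$ norm, so the completions are compatible. Thus $a$ lies in the $C_\theta^q$-completion with $\lVert a\rVert_{C_\theta^q}\le C\lVert f\rVert_{\widehat{\mathcal{F}}^q}$. Here we must check that the $C_\theta^q$-completion embeds injectively into $A_0+A_1$, so that this limit really is $a$. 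This is the step I expect to need the most care. Taking the infimum over $f$ then gives the reverse inequality.
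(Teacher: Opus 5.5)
Your route is the paper's: density of $\widehat{\mathcal{F}}^{q}_{0}$ from Lemma \ref{lem.dens}, followed by multiplication by $e^{8(z-\theta)^{2}}$. The easy inequality, the continuity of evaluation at $\theta$, and the fact that $(f_N(\theta))$ is Cauchy for $\lVert\cdot\rVert_{C^q_\theta}$ are all correct. However, the step you flag at the end is a genuine gap, not a formality.

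What you have actually shown is weaker than the claim. You get an element $\tilde a$ of the abstract $C^q_\theta$-completion, of norm $\le C\lVert f\rVert_{\widehat{\mathcal F}^q}$, whose image in $A_0+A_1$ is $a$. The fact that $\lVert\cdot\rVert_{C^q_\theta}$ dominates the $A_0+A_1$ norm only gives a continuous map of the completion into $A_0+A_1$, not an injective one. To conclude $\lVert a\rVert_{C^q_\theta}\le C\lVert f\rVert$ you need $\tilde a=a$. That is: if $x_N\in A_0\cap A_1$ is $C^q_\theta$-Cauchy and $x_N\to 0$ in $A_0+A_1$, then $\lVert x_N\rVert_{C^q_\theta}\to0$. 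Knowing $\lVert x_N\rVert^c_{\theta,q}\to 0$ does not help, because comparability of the two norms is exactly what is being proved. If you unwind this (choose representatives of the increments and use completeness of $\widehat{\mathcal F}^q$), it is essentially equivalent to a kernel-density statement: every $g\in\widehat{\mathcal F}^q$ with $g(\theta)=0$ is a limit of functions in $\widehat{\mathcal F}^q_0$ that vanish at $\theta$. That is the same obstacle you hit earlier, since you only control $a-f_N(\theta)$ in $A_0+A_1$, not in $A_0\cap A_1$. So the difficulty has been moved, not removed. The paper's two-line justification also leaves this exact-value issue implicit.

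One way to close the gap, which also makes the completion argument unnecessary, is to divide by a bounded scalar factor vanishing at $\theta$. Let $\Phi(z):=\frac{z-\theta}{z-\theta+2}$. It is bounded and analytic on $\overline S$, and its only zero there is a simple zero at $\theta$. On each line $\Re z=j$, both $\Phi$ and $1/\Phi$ equal $1$ plus a multiple of $(c+it)^{-1}$ with real $c\neq 0$ (namely $c=j-\theta+2$ and $c=j-\theta$). The Fourier transform of $(c+it)^{-1}$ is a one-sided exponential in $L^1$. Hence multiplication by $\Phi^{\pm1}$ is bounded for the $\widehat{\mathcal F}^q$-norm by Young's inequality, exactly as for your Gaussian.

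Now, given $f$ with $f(\theta)=a$:
\begin{itemize}
\item Put $g:=f-e^{(z-\theta)^2}a$ and $m:=g/\Phi$. The singularity at $\theta$ is removable, and $m\in\widehat{\mathcal F}^q$.
\item Take $m_N\in\widehat{\mathcal F}^q_0$ with $m_N\to m$, by Lemma \ref{lem.dens}.
\item Set $f_N:=e^{(z-\theta)^2}a+\Phi m_N$. This lies in $\widehat{\mathcal F}^q_0$, since $\Phi f_n$ is again an admissible scalar factor.
\end{itemize}
Then $f_N(\theta)=a$ exactly, and $\lVert f_N-f\rVert\le C_\Phi\lVert m_N-m\rVert\to 0$. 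The term involving $\lVert a\rVert_{A_0\cap A_1}$ cancels in this difference. Multiplying by $e^{8(z-\theta)^2}$ then gives $\lVert a\rVert_{C^q_\theta}\le C\lVert f\rVert_{\widehat{\mathcal F}^q}$ directly.
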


\subsection{Boundary values of functions on the strip}

\label{sec.B}

Let $Z$ be a Banach space and consider a function $f\in C_{b}^{1}\left( 
\mathbb{R},Z\right) \cap L^{2}\left( \mathbb{R},Z\right) $. The Hilbert
transform of $f$ is defined by

\begin{equation}
Hf(t):=\frac{1}{\pi }\lim_{\varepsilon \rightarrow 0}\int_{\varepsilon
<|t-s|<1/\varepsilon }\frac{f(s)}{t-s}ds\text{,}  \label{Hilb}
\end{equation}%
for $t\in \mathbb{R}$. According to Lemma 25 in \cite{CE-BB}, for any such
function $f$ the limit in (\ref{Hilb}) exists point-wise and it defines a
function $Hf\in C_{b}\left( \mathbb{R},Z\right) $.

As in \cite[Section 3.3.1]{CE-BB} we introduce the operators $H_{j}^{S}$ and 
$R_{j}^{S}$ that will be used to describe the boundary behaviour of analytic
functions on the strip. We recall that, for each $j=0,1$, 
\begin{equation*}
H_{j}^{S},R_{j}^{S}:C_{b}^{1}(\mathbb{R},Z)\cap L^{2}(\left\langle
t\right\rangle ^{d},Z)\rightarrow C_{b}(\mathbb{R},Z),
\end{equation*}%
are defined by%
\begin{equation*}
H_{j}^{S}f(t):=-\frac{i}{2}f(t)-\frac{(-1)^{j}}{2}Hf(t),
\end{equation*}%
and 
\begin{equation*}
R_{j}^{S}f(t):=\rho _{j}\ast f(t)\text{, }
\end{equation*}%
respectively, where $\rho _{j}\in C_{b}(\mathbb{R},\mathbb{C)\cap }L^{2}(%
\mathbb{R},\mathbb{C)}$ are the functions 
\begin{equation*}
\rho _{j}(t):=\frac{(-1)^{j}}{2\pi i}\frac{1}{1-2j+it}.
\end{equation*}

The operator $H_{j}^{S}$ is well-defined thanks to the fact that the Hilbert
transform is well-defined. Also, note that for $f\in L^{2}(\left\langle
t\right\rangle ^{d},Z)$ the quantity $R_{j}^{S}f$ is well-defined and by the
dominated convergence theorem we have $R_{j}^{S}f\in C_{b}\left( \mathbb{R}%
,Z\right) $.

The Fourier symbols corresponding to the operators $H_{j}^{S}$ and $%
R_{j}^{S} $ are 
\begin{equation*}
\chi _{j}(\xi ):=-\frac{i}{2}-\frac{(-1)^{j}}{2}i\sgn (\xi ),
\end{equation*}%
and $\widehat{\rho }_{j}(\xi )$ (for $\xi \in \mathbb{R}$) respectively.
(Note that, since $\rho _{j}$\ are $L^{2}$ functions, $\widehat{\rho }_{j}$
are also $L^{2}$ functions.)

In other words we have 
\begin{equation*}
\widehat{H_{j}^{S}f}(\xi )=\chi _{j}(\xi )\widehat{f}(\xi ),
\end{equation*}%
and 
\begin{equation*}
\widehat{R_{j}^{S}f}(\xi )=\widehat{\rho }_{j}(\xi )\widehat{f}(\xi )\text{, 
}
\end{equation*}%
(for $\xi \in \mathbb{R}$) for any $f$ in $C_{b}^{1}(\mathbb{R},Z)\cap
L^{2}(\left\langle t\right\rangle ^{d},Z)$. Clearly, $\chi _{j}$ are bounded
functions. One can see that $\widehat{\rho }_{j}$ are also bounded as the
following easy lemma shows us.

\begin{lemma}
\label{lem.ro}We have $\widehat{\rho }_{j}\in L^{\infty }(\mathbb{R},\mathbb{%
C})$, for any $j=0,1$.
\end{lemma}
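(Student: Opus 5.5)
We need $\widehat{\rho}_j\in L^\infty$ for $\rho_j(t)=\frac{(-1)^j}{2\pi i}\frac{1}{1-2j+it}$. The plan is to compute $\widehat{\rho}_j$ explicitly: it turns out to be a one-sided exponential, bounded in absolute value by a constant.

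First I record the relevant elementary Fourier pair. For $a>0$, the function $e_a(\xi):=e^{-a\xi}\mathbf{1}_{\xi>0}$ lies in $L^{1}(\mathbb{R})\cap L^{2}(\mathbb{R})$. Its Fourier transform, computed by direct integration, is $\int_0^\infty e^{-a\xi}e^{-it\xi}\,d\xi=\frac{1}{a+it}$. Similarly $\xi\mapsto e^{a\xi}\mathbf{1}_{\xi<0}$ has transform $\frac{1}{a-it}$.

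For $j=0$, $\rho_0(t)=\frac{1}{2\pi i}\frac{1}{1+it}$, which is therefore (a constant multiple of) the Fourier transform of $e_1$. For $j=1$, $\rho_1(t)=-\frac{1}{2\pi i}\frac{1}{-1+it}=\frac{1}{2\pi i}\frac{1}{1-it}$, which is the transform of the reflected exponential. By Plancherel on $L^2$ and the inversion formula, $\widehat{\rho}_j$ is, up to the normalization constant of the Fourier transform and a reflection $\xi\mapsto-\xi$, equal to $\frac{1}{2\pi i}e^{-|\xi|}\mathbf{1}_{\pm\xi>0}$. Concretely, with the convention $\widehat f(\xi)=\int f(t)e^{-it\xi}dt$ one obtains $\widehat{\rho}_0(\xi)=\frac{1}{i}e^{\xi}\mathbf{1}_{\xi<0}$, and analogously for $\rho_1$ with the opposite sign of $\xi$. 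In either case $|\widehat{\rho}_j|\le C$, which gives the claim.

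The only point needing care is that $\rho_j\notin L^1$, so $\widehat{\rho}_j$ must be understood in the $L^2$ (or tempered) sense. I handle this by computing the transform of the $L^1\cap L^2$ function $e_a$ and applying Fourier inversion/Plancherel, which identifies $\widehat{\rho}_j$ almost everywhere. The exact constants and reflection depend on the convention and are irrelevant for boundedness.

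Alternatively, one can avoid the explicit computation entirely: $\rho_j$ is, up to constants, $\widehat{g}$ for some $g\in L^1\cap L^2$. Then $\widehat{\rho}_j=2\pi g(-\cdot)$ (with the convention above) is bounded, since $g$ itself is a bounded exponential.
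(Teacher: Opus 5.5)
Your proof is correct, but it takes a more direct route than the paper.

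The paper reduces both cases to the single function $\rho(t)=1/(1+it)$, since $\rho_0$ and $\rho_1$ are constant multiples of $\rho$ and $\rho(-\cdot)$. It then splits $\rho$ into its even and odd parts, $g_1(t)=1/(1+t^2)$ and $g_2(t)=-it/(1+t^2)$. For $g_1$ it quotes the classical transform $\widehat{g}_1(\xi)=c_1e^{-c|\xi|}$ of the Poisson-type kernel, and it obtains $\widehat{g}_2=c_3e^{-c|\xi|}\sgn(\xi)$ as a multiple of $\partial\widehat{g}_1$.

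You use no decomposition. You recognize $1/(1+it)$ and $1/(1-it)$ directly as Fourier transforms of the one-sided exponentials $e^{-\xi}\mathbf{1}_{\xi>0}$ and $e^{\xi}\mathbf{1}_{\xi<0}$. You then apply the identity $\widehat{\widehat{g}}=2\pi g(-\cdot)$, which is valid on $L^2$ in your convention.

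Your route has three advantages:
\begin{itemize}
\item The only computation is the integral of an $L^1$ function.
\item The identification of $\widehat{\rho}_j$ as an $L^2$ function is explicitly justified. In the paper, $g_2\notin L^1$, so the relation $\widehat{g}_2=c_2\partial\widehat{g}_1$ has to be read distributionally, which is left implicit.
\item You get sharper information: $\widehat{\rho}_j$ is supported on a half-line and satisfies $|\widehat{\rho}_j|\le 1$ in your normalization. In the paper's formula this corresponds to $c_1e^{-c|\xi|}$ and $c_3e^{-c|\xi|}\sgn(\xi)$ cancelling on one half-line; for instance, $\pi e^{-|\xi|}(1-\sgn\xi)=2\pi e^{\xi}\mathbf{1}_{\xi<0}$ with your convention.
\end{itemize}

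The paper's version, in exchange, needs only a standard table entry plus the differentiation rule. For the mere boundedness claim, either argument suffices.
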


\noindent\textbf{Proof.} It suffices to see that the function $\rho \in
C_{b}(\mathbb{R},\mathbb{C)\cap }L^{2}(\mathbb{R},\mathbb{C)}$ with 
\begin{equation*}
\rho (t):=\frac{1}{1+it},
\end{equation*}%
has a bounded Fourier transform. We can write $\rho =g_{1}+g_{2}$, where 
\begin{equation*}
g_{1}(t):=\frac{1}{1+t^{2}}\text{ \ \ and \ }g_{2}(t):=-\frac{it}{1+t^{2}}.
\end{equation*}

By a direct computation (see for instance \cite[Exercise 3, p. 127]{St-Sh}),
we have 
\begin{equation*}
\widehat{g}_{1}(\xi )=c_{1}e^{-c|\xi |},
\end{equation*}%
and 
\begin{equation*}
\widehat{g}_{2}(\xi )=c_{2}\partial \widehat{g}_{1}(\xi )=c_{3}e^{-c|\xi
|}\sgn (\xi ),
\end{equation*}
where $c>0$, $c_{1},c_{2},c_{3}\in \mathbb{C}$ are constants. Since $%
\widehat{g}_{1},\widehat{g}_{2}\in L^{\infty }$, we have $\widehat{\rho }%
_{j}=\widehat{g}_{1}+\widehat{g}_{2}\in L^{\infty }$. \hfill $\square $

\bigskip

In the proof of our interpolation result it will be useful to have a variant
of Cauchy's formula connecting the boundary values of an analytic function
with the function itself. For this purpose we recall (a variant of) Lemma 27
from \cite[Section 3.3.1]{CE-BB}:

\begin{lemma}
\label{l1}Fix some $\alpha >0$. Suppose $\left( B_{0},B_{1}\right) $ is a
compatible couple of Banach spaces. Consider some functions\footnote{%
In \cite[Section 3.3.1]{CE-BB} we have instead the stronger condition $%
u_{j}\in C_{b}^{1}\left( \mathbb{R},B_{j}\right) \cap L^{2}(\exp (\alpha
t^{2}),B_{j}) $, for some $\alpha >0$. However, as it is easy to see from
the proof of \cite[Lemma 27]{CE-BB}, other weaker conditions are also
available.} $u_{j}\in
C_{b}^{1}\left( \mathbb{R},B_{j}\right) \cap L^{2}(\left\langle
t\right\rangle ^{d},B_{j})$, $j=0,1$, and define $u:\overline{S}\rightarrow B_{0}+B_{1}$ by 
\begin{equation}
u(z):=-\frac{1}{2\pi i}\int_{\mathbb{R}}\frac{u_{0}\left( t\right) }{it-z}dt+%
\frac{1}{2\pi i}\int_{\mathbb{R}}\frac{u_{1}\left( t\right) }{1+it-z}dt\text{%
,}  \label{Cauchy}
\end{equation}%
for all $z\in S$, and 
\begin{equation*}
u\left( j+it\right) :=H_{j}^{S}u_{j}(t)+R_{j}^{S}u_{1-j}(t)\text{,}
\end{equation*}%
for all $t\in \mathbb{R}$. Then, $u$ is analytic in $S$ and $u\in
C_{b}\left( \overline{S},B_{0}+B_{1}\right) $.
\end{lemma}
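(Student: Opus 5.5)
We have to prove Lemma \ref{l1}: $u$ defined by the Cauchy-type integral (\ref{Cauchy}) in $S$ and by $H_j^S u_j+R_j^S u_{1-j}$ on $\partial S$ is analytic in $S$ and belongs to $C_b(\overline{S},B_0+B_1)$. The plan is to treat the two integrals in (\ref{Cauchy}) separately. Each is a half-plane Cauchy integral of a single boundary function, viewed as a function on the strip. For the $u_0$ term, we show two things. On $\Re z=0$ it extends to $H_0^S u_0$. On $\Re z=1$ it extends continuously to the bounded smooth convolution $R_1^S u_0$, since $1/(it-(1+is))$ has no singularity. The $u_1$ term is handled symmetrically.

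\textbf{Step 1: analyticity and boundedness in the interior.} Since $u_j\in L^2(\langle t\rangle^d,B_j)$ with $d\ge 2$, Cauchy--Schwarz gives $u_j\in L^1(\mathbb{R},B_j)$. Also $\sup_{z\in K}|it-z|^{-1}$ and its $z$-derivative are integrable against $\|u_j(t)\|$ on compact $K\subset S$. Differentiation under the integral (or Morera plus Fubini) then gives analyticity in $S$ with values in $B_0+B_1$.

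\textbf{Step 2: the non-singular boundary component.} Write $z=x+iy$ and look at the $u_1$ term as $x\to 0$. The kernel $\frac{1}{2\pi i}\frac{1}{1-x+i(t-y)}$ converges to $\frac{1}{2\pi i}\frac{1}{1+i(t-y)}$. The kernel is bounded by $C/\langle t-y\rangle$ uniformly in $x\in[0,1/2]$, which is in $L^2$, and $u_1\in L^2$. So dominated convergence (or uniform continuity of convolution of an $L^2$ kernel family with an $L^2$ function) gives convergence to $\rho_0\ast u_1(y)=R_0^S u_1(y)$. Joint continuity in $(x,y)$ also follows, and the same $L^2$ bound gives uniform boundedness. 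The signs of $\rho_j$ are checked against (\ref{Cauchy}). The $u_0$ term near $x=1$ is handled identically and yields $R_1^S u_0$.

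\textbf{Step 3: the singular boundary component (main obstacle).} Consider $-\frac{1}{2\pi i}\int \frac{u_0(t)}{it-z}\,dt$ as $z\to iy_0$ from $S$. Write it as $\frac{1}{2\pi}\int \frac{u_0(t)}{(t-y)+ix}\,dt$ up to constants. This is the conjugate-Poisson plus Poisson decomposition: the real part of $\frac{1}{s+ix}$ gives $\pi$ times the conjugate Poisson kernel, and the imaginary part gives $\pi$ times the Poisson kernel. Poisson integrals of the bounded uniformly continuous $u_0$ converge to $u_0(y_0)$, locally uniformly. For the conjugate Poisson part, split the integral into $|t-y|<1$ and $|t-y|\ge1$.
\begin{itemize}
\item On $|t-y|<1$, write $u_0(t)-u_0(y)$. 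The kernel is odd, so the $u_0(y)$ part cancels. The remaining difference is bounded by $\|u_0'\|_\infty|t-y|$, so the integrand is $O(1)$ uniformly in $x$. Dominated convergence then gives convergence to the truncated principal value, continuously in $y$.
\item On $|t-y|\ge1$, the kernels are dominated by $C/|t-y|\in L^2$, so this part converges like in Step 2.
\end{itemize}
This identifies the limit as $-\frac{i}{2}u_0-\frac12 Hu_0=H_0^S u_0$, with the constants fixed by matching against the convention (\ref{Hilb}) and the symbol $\chi_0$. The same estimates, uniform in $y$ because $u_0\in C^1_b$, give boundedness and joint continuity up to $\Re z=0$. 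The case $j=1$ is symmetric and gives $H_1^S u_1$, with the sign $(-1)^j$ coming from orientation. Continuity on $\overline S$ then follows by combining the interior continuity, these uniform boundary limits, and the continuity of $H_j^S u_j$ and $R_j^S u_{1-j}$ on $\mathbb{R}$ noted before the lemma.
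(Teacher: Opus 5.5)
Your Steps 1--3 are the standard Poisson/conjugate-Poisson (Plemelj--Sokhotski) argument. The paper has no proof of its own and points to the proof of Lemma~27 in \cite{CE-BB}; your argument is the same kind. The estimates are sound: $L^1$-integrability from $\langle t\rangle^{-d}\in L^1$, the $\|u_0'\|_\infty$ bound on $|t-y|<1$, and the $L^2$ domination on $|t-y|\ge 1$. The constants in $H_j^S$ also come out right. The first integral in (\ref{Cauchy}) tends to $-\frac{i}{2}u_0(y)-\frac12 Hu_0(y)$ as $z\to iy$, and the second tends to $-\frac{i}{2}u_1(y)+\frac12 Hu_1(y)$ as $z\to 1+iy$.

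The step that fails is the one you only assert, namely ``the signs of $\rho_j$ are checked against (\ref{Cauchy})''. The limiting kernel you write down in Step~2 is $\frac{1}{2\pi i}\frac{1}{1+i(t-y)}=\rho_0(t-y)$. But $R_0^Su_1(y)=\rho_0\ast u_1(y)=\int_{\mathbb{R}}\rho_0(y-t)u_1(t)\,dt$ has kernel $\frac{1}{2\pi i}\frac{1}{1+i(y-t)}$. In the same way, the limit of the first integral as $z\to 1+iy$ is $\int_{\mathbb{R}}\rho_1(t-y)u_0(t)\,dt$, not $\rho_1\ast u_0(y)$.

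This is not cosmetic. Take $u_0=0$ and $u_1(t)=e^{-(t-1)^2}b$ with $0\neq b\in B_1$. The limit of $u(z)$ as $z\to 0$ from inside $S$, minus the prescribed value $u(0)=R_0^Su_1(0)$, equals $\bigl(-\frac{1}{\pi}\int_{\mathbb{R}}\frac{t\,e^{-(t-1)^2}}{1+t^2}\,dt\bigr)\,b$. The integral here is strictly positive, so the difference is nonzero.

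So with $\rho_j$ and $\ast$ as defined in the paper, $u$ is discontinuous at $z=0$, and the statement as written is false. What your argument actually proves is the lemma with $\rho_j$ replaced by its reflection $\rho_j(-\,\cdot)$, i.e.\ $\rho_j(t)=\frac{(-1)^j}{2\pi i}\frac{1}{1-2j-it}$. You should carry out the check and state this corrected kernel rather than claim that the signs match. The correction is harmless for the rest of the paper: only $\widehat{\rho}_j\in L^\infty$ is used later, and reflection preserves that.
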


\subsection{Proof of the main results}

We use now the methods in \cite[Section 3.3.2]{CE-BB} together with the
facts from Sections \ref{sec.C} and \ref{sec.B} in order to prove a real
interpolation analogue of \cite[Lemma 29]{CE-BB}.

\begin{theorem}
\label{th.int} Let $A_{0},A_{1},B_{0},B_{1},E,F$ be some non-trivial Banach
spaces such that $A_{0},A_{1},B_{0},B_{1}\hookrightarrow E$, and consider a
bounded linear operator $T:E\rightarrow F$. Assume that $A_{0}$, $A_{1}$
have separable preduals and $E$ is separable and reflexive. Suppose
moreover, that

(i) $A_{1},B_{0}\hookrightarrow A_{0}$ and $A_{0}\cap B_{1}\hookrightarrow
A_{1}$;

(ii) $T(B_{1})\hookrightarrow T(A_{1})$.

Then, for any $\theta \in (0,1)$ and any $q\in \lbrack 1,\infty )$ we have 
\begin{equation*}
T((B_{0},B_{1})_{\theta ,q})\hookrightarrow T((A_{0},A_{1})_{\theta ,q}).
\end{equation*}
\end{theorem}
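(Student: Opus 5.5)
The plan is to work with the complex description of the real method from Section \ref{sec.C}. By (\ref{C-0}) and Lemma \ref{lem.d} it suffices to prove the following. For every $b\in B_{0}\cap B_{1}$ and every $f\in\widehat{\mathcal{F}}_{00}^{q}(B_{0},B_{1})$ with $f(\theta)=b$, we should produce $u\in\widehat{\mathcal{F}}^{q}(A_{0},A_{1})$ such that $Tu(\theta)=Tb$ and $\lVert u\rVert_{\widehat{\mathcal{F}}^{q}}\lesssim\lVert f\rVert_{\widehat{\mathcal{F}}^{q}}$. The passage to a general $b\in(B_{0},B_{1})_{\theta,q}$ is done at the end by approximation.

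\textbf{Step 1: solve on the boundary line $\Re z=1$, in Fourier variables.} Put $g(\xi):=\widehat{f(1+i\cdot)}(\xi)\in B_{1}$. For each $\xi$, hypothesis (ii) gives some $a(\xi)\in A_{1}$ with $Ta(\xi)=Tg(\xi)$ and $\lVert a(\xi)\rVert_{A_{1}}\le C\lVert g(\xi)\rVert_{B_{1}}$. The choice must be made measurably in $\xi$.
\begin{itemize}
\item Since $f=\sum b_{n}f_{n}$ is finite-dimensional valued, $g$ takes values in $\operatorname{span}\{b_{n}\}$.
\item One option is to fix solutions $a_{n}$ for a basis of that span and use a measurable selection for the norm constraint.
\item The other option is a measurable selection theorem for the closed-valued multifunction $\xi\mapsto\{a:\ Ta=Tg(\xi),\ \lVert a\rVert\le C\lVert g(\xi)\rVert\}$, whose values are weak$^{\ast}$ compact. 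This is where the separable preduals of $A_{j}$ and the separability and reflexivity of $E$ enter.
\end{itemize}
Then set $u_{1}:=\mathcal{F}^{-1}a$ and $u_{0}:=f(i\cdot)\in B_{0}\hookrightarrow A_{0}$. We have $\lVert\widehat{u}_{1}\rVert_{L^{q}(A_{1})}\le C\lVert\widehat{f(1+i\cdot)}\rVert_{L^{q}(B_{1})}$ and $Tu_{1}=Tf(1+i\cdot)$.

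To meet the hypotheses of Lemma \ref{l1} ($C_{b}^{1}$ and $\langle t\rangle^{d}$-weighted $L^{2}$), I will first multiply $a$ by $\widehat{\varphi}(\varepsilon\cdot)$ and $f$ by Gaussian factors, exactly as in the proof of Lemma \ref{lem.dens}. These modifications cost only $o(1)$ in norm.

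\textbf{Step 2: glue with the Cauchy formula.} Define $u$ from $u_{0},u_{1}$ by (\ref{Cauchy}); by Lemma \ref{l1} it is analytic in $S$ and continuous on $\overline{S}$.
\begin{itemize}
\item \emph{$Tu$ agrees with $Tf$.} The function $f$ is itself reproduced by the same formula from its boundary values, and $T$ is bounded and commutes with the integrals. Since $Tu_{j}=Tf(j+i\cdot)$, we get $Tu=Tf$, and in particular $Tu(\theta)=Tb$.
\item \emph{The line $\Re z=0$.} Here $u(it)=H_{0}^{S}u_{0}+R_{0}^{S}u_{1}\in A_{0}$, because $A_{1}\hookrightarrow A_{0}$.
\item \emph{The line $\Re z=1$.} Here $u(1+it)=H_{1}^{S}u_{1}+R_{1}^{S}u_{0}$. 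The first term lies in $A_{1}$. For the second, apply the boundary formula to $f$: $R_{1}^{S}f(i\cdot)=f(1+i\cdot)-H_{1}^{S}f(1+i\cdot)$, which lies in $B_{1}$. It also lies in $A_{0}$, since $u_{0}\in A_{0}$. By (i), $A_{0}\cap B_{1}\hookrightarrow A_{1}$, so this term lies in $A_{1}$ as well.
\item \emph{Norm bounds.} On the Fourier side these are multiplications by $\chi_{j}$ and $\widehat{\rho}_{j}$, which are bounded by Lemma \ref{lem.ro}. Hence $\lVert u\rVert_{\widehat{\mathcal{F}}^{q}(A_{0},A_{1})}\lesssim\lVert f\rVert_{\widehat{\mathcal{F}}^{q}(B_{0},B_{1})}$.
\end{itemize}
The $L^{1}$-type requirement in $\mathcal{H}^{1}$ follows from the regularization in Step 1.

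\textbf{Step 3: conclude.} We obtain $a:=u(\theta)$ with $\lVert a\rVert_{(A_{0},A_{1})_{\theta,q}}\lesssim\lVert b\rVert_{(B_{0},B_{1})_{\theta,q}}$ and $Ta=Tb$, for every $b\in B_{0}\cap B_{1}$. For general $b$, take $b_{k}\to b$ in $(B_{0},B_{1})_{\theta,q}$ with $b_{k}\in B_{0}\cap B_{1}$; this is possible by density since $q<\infty$. The corresponding solutions $a_{k}$ are bounded in $(A_{0},A_{1})_{\theta,q}$, hence bounded in $E$.
\begin{itemize}
\item A subsequence converges weakly in $E$, which is reflexive; continuity of $T$ gives $Ta=Tb$ for the limit $a$.
\item The bound $\lVert a\rVert\lesssim\lVert b\rVert$ in the limit space comes from weak$^{\ast}$ lower semicontinuity of the norm. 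For this I would use the separable preduals of $A_{j}$, either via the $K$-functional or by passing to the limit at the level of the analytic functions $u_{k}$ on the boundary.
\end{itemize}

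I expect the main obstacle to be the measurable, norm-controlled selection in Step 1, together with keeping enough boundary regularity for Lemma \ref{l1} and preserving $Tu=Tf$ after regularization. I would first try reducing to finite-dimensional data, using the finite sum structure of $\widehat{\mathcal{F}}_{00}^{q}$. The other delicate point is the final weak$^{\ast}$ limiting argument.
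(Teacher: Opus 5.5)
Your proposal is correct and follows essentially the paper's proof. The steps match: set $u_0=v_0$, solve pointwise in $\xi$ on $\Re z=1$ via (ii), glue with the Cauchy formula, control the cross term $\widehat{\rho}_1\widehat{v}_0=(1-\chi_1)\widehat{v}_1$ through $A_0\cap B_1\hookrightarrow A_1$, read off $Tu(\theta)=Tv(\theta)$, and conclude by density plus weak$^{\ast}$ compactness; the only deviations are technical and workable (your selection/finite-dimensional reduction in place of the paper's conditional-expectation/Ascoli footnote, and a $K$-functional Fatou argument in place of its $J$-decomposition with Banach--Alaoglu).
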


The proof of Theorem \ref{th.int} can be schematically described by the following diagram:

\begin{equation*}
\begin{tikzpicture}[commutative diagrams/every diagram]
\node (V0) at (45:2.5cm) {$v_{0}$};
\node (U0) at (3*45:2.5cm) {$u_{0}$} ;
\node (U1) at (5*45:2.5cm) {$u_{1}$};
\node (V1) at (7*45:2.5cm) {$v_{1}$};
\node (B) at (0:1cm) {$b$};
\node (V) at (0:3.5cm) {$v$};
\node (A) at (0:-1cm) {$a$};
\node (U) at (0:-3.5cm) {$u$};

\path[commutative diagrams/.cd, every arrow, every label]
(B) edge node {$b=v(\theta)$} (V)
(U) edge node  {$u(\theta)=a$} (A)
(V0) edge node [swap] {$Tu_{0}=Tv_{0}$} (U0)
(V1) edge node [swap] {$Tu_{1}=Tv_{1}$} (U1)
(V) edge node { } (V0)
(V) edge node { } (V1)
(U0) edge node  { } (U)
(U1) edge node  { } (U);
\end{tikzpicture}
\end{equation*}
We first pick an element $b$ from a specific dense subspace of $(B_{0},B_{1})_{\theta, q}$ and we find some function $v$ in  $\widehat{\mathcal{F}}_{00}^{q}(B_{0},B_{1})$ with $v(\theta )=b$. Then, we consider the boundary values $v_{0}$ and $v_{1}$ of $v$ on the two corresponding sides of $\partial S$, where $j=0$ or $j=1$ respectively. On each side corresponding to $j$, we find some regular function $u_{j}$ such that we have $Tu_{j}=Tv_{j}$ point-wise on $\mathbb{R}$. Next, by using Cauchy's formula (\ref{Cauchy}) we construct from $u_{0}$ and  $u_{1}$ an analytic function $u$ that belongs to $\widehat{\mathcal{F}}^{q}(A_{0},A_{1})$. One can now set $a:=u(\theta)$, obtaining by Cauchy's formula and the fact that $Tu_{j}=Tv_{j}$ the equality $Tu(\theta)=Tv(\theta)$, i.e., $Ta=Tb$. We can pass to the full space of sources $(B_{0},B_{1})_{\theta, q}$ by approximation and an argument involving the Banach-Alaoglu theorem.

\noindent \textbf{Proof of Theorem \ref{th.int}.} Consider some $b\in B_{0}\cap B_{1}$. By Lemma \ref%
{lem.d} there exists a function $v\in \widehat{\mathcal{F}}%
_{00}^{q}(B_{0},B_{1})$ with $v(\theta )=b$, such that 
\begin{equation}
\left\Vert v\right\Vert _{\widehat{\mathcal{F}}^{q}(B_{0},B_{1})}=%
\max_{j=0,1}\left\Vert \widehat{v}_{j}\right\Vert _{L^{q}(\mathbb{R}%
B_{j})}\lesssim \left\Vert b\right\Vert _{(B_{0},B_{1})_{\theta ,q}},
\label{v-b}
\end{equation}%
where $v_{j}(t):=v(j+it)$, for any $t\in \mathbb{R}$.

Since $v$ is analytic (and bounded with regular boundary values), using
Lemma \ref{l1}, we have 
\begin{equation*}
v_{j}=H_{j}^{S}v_{j}+R_{j}^{S}v_{1-j},
\end{equation*}%
and by taking the Fourier transform we obtain%
\begin{equation}
\widehat{v}_{j}=\chi _{j}\widehat{v}_{j}+\widehat{\rho }_{j}\widehat{v}%
_{1-j}.  \label{v-v}
\end{equation}

By (\ref{v-b}) and the fact that $\chi _{1}$ is a bounded function, we have $%
\widehat{v}_{1}\in L^{q}(\mathbb{R},B_{1})$ and 
\begin{equation*}
\left\Vert \chi _{1}\widehat{v}_{1}\right\Vert _{L^{q}(\mathbb{R},
B_{1})}\lesssim \left\Vert b\right\Vert _{(B_{0},B_{1})_{\theta ,q}}.
\end{equation*}

Combining this with (\ref{v-v}) for $j=1$ (and (\ref{v-b})) we get 
\begin{equation}
\left\Vert \widehat{\rho }_{1}\widehat{v}_{0}\right\Vert _{L^{q}(\mathbb{R},
B_{1})}\lesssim \left\Vert b\right\Vert _{(B_{0},B_{1})_{\theta ,q}}.
\label{ro-v-b}
\end{equation}

\textbf{I. Construction of the function }$u$\textbf{.} First we put 
\begin{equation*}
u_{0}:=v_{0}.
\end{equation*}

By (ii), for any element $\widehat{v}_{1}(\xi )\in B_{1}$, with $\xi \in 
\mathbb{R}$, there exists an element $\widehat{u}_{1}(\xi )\in A_{1}$, such
that 
\begin{equation}
T\widehat{u}_{1}(\xi )=T\widehat{v}_{1}(\xi ),  \label{T-uv}
\end{equation}%
and 
\begin{equation}
\left\Vert \widehat{u}_{1}(\xi )\right\Vert _{A_{1}}\lesssim \left\Vert 
\widehat{v}_{1}(\xi )\right\Vert _{B_{1}},  \label{uv-1}
\end{equation}%
uniformly in $\xi \in \mathbb{R}$. As in the proof of Lemma 28 in \cite[%
Section 3.3.1]{CE-BB} we can assume\footnote{This can be done as follows. We divide $\mathbb{R}$ in disjoint intervals each of length $1/N$ (where $N$ are dyadic integers) and consider the conditional  expectation $\mathbb{E}_{N}\widehat{v}_{1}$ with respect to these intervals. One can now find an appropriate simple function $\widehat{u}_{1,N}$ satisfying $T\widehat{u}_{1,N}=T\mathbb{E}_{N}\widehat{v}_{1}$ point-wise. We take now on both sides convolution with appropriate smooth functions (corresponding to the multiplication with $g_{\varepsilon}(z)=\exp(\varepsilon z^2)$, with small $\varepsilon>0$ on the side of $v$). It remains to let $N\rightarrow\infty$ and to use an Ascoli type theorem (see the Appendix of \cite{CE-BB}). We obtain $T\widehat{u}_{1}^{\varepsilon}=T(\widehat{v}_{1})_{\varepsilon}$ for some regular $\widehat{u}_{1}^{\varepsilon}$ that will be denoted by $\widehat{u}_{1}$. Here, $(\widehat{v}_{1})_{\varepsilon}$ is the Fourier transform of the boundary value of $g_{\varepsilon}v$ (on the side ``$j=1$'') which in turn approximates the initial $v$ when $\varepsilon\rightarrow 0$. Since $T$ and the multiplication by $g_{\varepsilon}$ are commuting, one can work from the beginning with $g_{\varepsilon}v$ in the place of $v$. In the proof we choose for simplicity to re-denote $g_{\varepsilon}v$ by $v$ and correspondingly $b_{\varepsilon}:=(g_{\varepsilon}v)(\theta)$ by $b$.} that $\partial ^{l}\widehat{u}_{1}\in
C_{b}^{1}\left( \mathbb{R},B_{j}\right) \cap L^{2}(\mathbb{R},B_{1})$, for
any $l\in \{0,1,...m\}$ where $m$ is a fixed positive integer (in particular
for $m=2d$). Now, we define $u_{1}:=(\widehat{u}_{1})^{\vee }$ (i.e., $u_{1}$
is the inverse Fourier transform of $\widehat{u}_{1}$) and we have $u_{1}\in
C_{b}^{1}\left( \mathbb{R},B_{1}\right) \cap L^{2}(\left\langle
t\right\rangle ^{d},B_{1})$. Note that, by definition, we also have $%
u_{0}\in C_{b}^{1}\left( \mathbb{R},B_{0}\right) \cap L^{2}(\left\langle
t\right\rangle ^{d},B_{0})$.

We construct now $u$ from $u_{0}$ and $u_{1}$, via the Cauchy's formula, as
in Lemma \ref{l1}. We have 
\begin{equation}
\widehat{u(j+i\cdot )}=\chi _{j}\widehat{u}_{j}+\widehat{\rho }_{j}\widehat{u%
}_{1-j}.  \label{u-j}
\end{equation}

\textbf{I.1. Estimate in the case where }$j=0$\textbf{.} We have $\widehat{u}_{0}=\widehat{%
v}_{0}$ and by using the boundedness of $\chi _{0}$ together with the
condition $B_{0}\hookrightarrow A_{0}$ (see (i)) and (\ref{v-b}), one can
write%
\begin{equation}
\left\Vert \chi _{0}\widehat{u}_{0}\right\Vert _{L^{q}(\mathbb{R},
A_{0})}\lesssim \left\Vert \widehat{v}_{0}\right\Vert _{L^{q}(\mathbb{R},
B_{0})}\lesssim \left\Vert b\right\Vert _{(B_{0},B_{1})_{\theta ,q}}.
\label{0-1}
\end{equation}

Also, using the boundedness of $\widehat{\rho }_{0}$ (see Lemma \ref{lem.ro}%
), together with the condition $A_{1}\hookrightarrow A_{0}$ (see (i)) and (%
\ref{v-b}), (\ref{uv-1}), we get 
\begin{eqnarray}
\left\Vert \widehat{\rho }_{0}\widehat{u}_{1}\right\Vert _{L^{q}(\mathbb{R}%
,A_{0})} &\lesssim &\left\Vert \widehat{u}_{1}\right\Vert _{L^{q}(\mathbb{R}%
,A_{0})}\lesssim \left\Vert \widehat{u}_{1}\right\Vert _{L^{q}(\mathbb{R}%
,A_{1})}  \notag \\
&\lesssim &\left\Vert \widehat{v}_{1}\right\Vert _{L^{q}(\mathbb{R}%
,B_{1})}\lesssim \left\Vert b\right\Vert _{(B_{0},B_{1})_{\theta ,q}}.
\label{0-2}
\end{eqnarray}

From (\ref{u-j}), (\ref{0-1}) and (\ref{0-2}) for $j=0$ we obtain 
\begin{equation}
\left\Vert \widehat{u(i\cdot )}\right\Vert _{L^{q}(\mathbb{R},
A_{0})}\lesssim \left\Vert b\right\Vert _{(B_{0},B_{1})_{\theta ,q}}.
\label{0-u}
\end{equation}

\textbf{I.2. Estimate in the case }$j=1$\textbf{.} By the boundedness of $\chi _{1}$
and (\ref{uv-1}), (\ref{v-b}) we have 
\begin{equation}
\left\Vert \widehat{u}_{1}\right\Vert _{L^{q}(\mathbb{R},A_{1})}\lesssim
\left\Vert \widehat{v}_{1}\right\Vert _{L^{q}(\mathbb{R},B_{1})}\lesssim
\left\Vert b\right\Vert _{(B_{0},B_{1})_{\theta ,q}}.  \label{1-1}
\end{equation}

Also, the boundedness of $\widehat{\rho }_{1}$ (see Lemma \ref{lem.ro}) and (%
\ref{ro-v-b}) give us 
\begin{equation}
\left\Vert \widehat{\rho }_{1}\widehat{u}_{0}\right\Vert _{L^{q}(\mathbb{R},
B_{1})}=\left\Vert \widehat{\rho }_{1}\widehat{v}_{0}\right\Vert _{L^{q}(%
\mathbb{R},B_{1})}\lesssim \left\Vert b\right\Vert _{(B_{0},B_{1})_{\theta
,q}}.  \label{1-2'}
\end{equation}

On the other hand, since $\widehat{\rho }_{1}$ is bounded and $%
B_{0}\hookrightarrow A_{0}$ (see (i)), 
\begin{equation}
\left\Vert \widehat{\rho }_{1}\widehat{u}_{0}\right\Vert _{L^{q}(\mathbb{R},
A_{0})}\lesssim \left\Vert \widehat{v}_{0}\right\Vert _{L^{q}(\mathbb{R},
A_{0})}\lesssim \left\Vert \widehat{v}_{0}\right\Vert _{L^{q}(\mathbb{R},
B_{0})}\lesssim \left\Vert b\right\Vert _{(B_{0},B_{1})_{\theta ,q}}
\label{1-2''}
\end{equation}%
(where we have also used (\ref{v-b})). The embedding $A_{0}\cap
B_{1}\hookrightarrow A_{1}$ (see (i)) together with (\ref{1-2'}), (\ref%
{1-2''}) yields 
\begin{equation}
\left\Vert \widehat{\rho }_{1}\widehat{u}_{0}\right\Vert _{L^{q}(\mathbb{R},
A_{1})}\lesssim \left\Vert b\right\Vert _{(B_{0},B_{1})_{\theta ,q}}.
\label{1-2}
\end{equation}

From (\ref{1-1}), (\ref{1-2}) and (\ref{u-j}) for $j=1$ we obtain 
\begin{equation}
\left\Vert \widehat{u(1+i\cdot )}\right\Vert _{L^{q}(\mathbb{R},
A_{1})}\lesssim \left\Vert b\right\Vert _{(B_{0},B_{1})_{\theta ,q}}.
\label{1-u}
\end{equation}

\textbf{II. Definition of }$a$\textbf{.} By taking the Fourier transform in (%
\ref{T-uv}) we obtain 
\begin{equation*}
Tu_{1}=Tv_{1},
\end{equation*}%
hence (since $u_{0}=v_{0}$), 
\begin{equation*}
Tu_{j}=Tv_{j},
\end{equation*}%
for any $j=0,1$. This implies via (\ref{Cauchy}) that 
\begin{equation*}
Tu(\theta )=Tv(\theta ),
\end{equation*}%
i.e., 
\begin{equation}
Ta=Tb,  \label{Ta-Tb}
\end{equation}%
where $a:=u(\theta )$. By (\ref{0-u}), (\ref{1-u}) we conclude that $a\in
(A_{0},A_{1})_{\theta ,q}^{c}=(A_{0},A_{1})_{\theta ,q}$ and the estimate 
\begin{equation}
\left\Vert a\right\Vert _{(A_{0},A_{1})_{\theta ,q}}\lesssim \left\Vert
b\right\Vert _{(B_{0},B_{1})_{\theta ,q}},  \label{Ta-Tb-1}
\end{equation}%
is satisfied.

\textbf{III. Approximation.} Now consider some $b\in (B_{0},B_{1})_{\theta ,q}$ and a sequence $%
(b_{n})_{n\geq 1}$ in $B_{0}\cap B_{1}$ such that $b_{n}\rightarrow b$ in
the norm of $(B_{0},B_{1})_{\theta ,q}$ and 
\begin{equation*}
\left\Vert b_{n}\right\Vert _{(B_{0},B_{1})_{\theta ,q}}\lesssim \left\Vert
b\right\Vert _{(B_{0},B_{1})_{\theta ,q}}.
\end{equation*}

By (\ref{Ta-Tb}), (\ref{Ta-Tb-1}) there exists a sequence $(a_{n})_{n\geq 1}$
in $(A_{0},A_{1})_{\theta ,q}$ with 
\begin{equation}
Ta_{n}=Tb_{n},  \label{Ta-n}
\end{equation}
and 
\begin{equation*}
\left\Vert a_{n}\right\Vert _{(A_{0},A_{1})_{\theta ,q}}\lesssim \left\Vert
b_{n}\right\Vert _{(B_{0},B_{1})_{\theta ,q}}\lesssim \left\Vert
b\right\Vert _{(B_{0},B_{1})_{\theta ,q}}.
\end{equation*}

Consider a decomposition of the form (see \cite[Lemma 3.2.3]{LB}) 
\begin{equation*}
a_{n}=\sum_{\nu \in \mathbb{Z}}a_{n}^{\nu },
\end{equation*}%
with 
\begin{equation}
\left( \sum_{\nu \in \mathbb{Z}}(2^{-\nu \theta }J(2^{\nu },a_{n}^{\nu
}))^{q}\right) ^{1/q}\lesssim \left\Vert a_{n}\right\Vert
_{(A_{0},A_{1})_{\theta ,q}}\lesssim \left\Vert b\right\Vert
_{(B_{0},B_{1})_{\theta ,q}}.  \label{J-an}
\end{equation}

By the Banach-Alaoglu theorem there exists a subsequence $(n_{k})_{k\geq 1}$
and elements $a^{\nu }\in A_{0}\cap A_{1}$ such that $a_{n_{k}}^{\nu
}\rightarrow a^{\nu }$, in the $w^{\ast }-$topology of $A_{0}$, $A_{1}$ and%
\footnote{%
Note that $E$ has a separable predual. This follows from the fact that if
the dual $X^{\ast }$ of a Banach space $X$ is separable, then, $X$ is
separable (see for instance \cite[Theorem 4.6-8, p. 245]{Kr}).} $E$. We also
have some $a\in E$ such that $a_{n_{k}}^{\nu }\rightarrow a^{\nu }$ in the $%
w^{\ast }-$topology of $E$ and 
\begin{equation*}
a=\sum_{\nu \in \mathbb{Z}}a^{\nu }.
\end{equation*}

For any positive integer $N$ we have by (\ref{J-an}) 
\begin{equation*}
\left( \sum_{|\nu |\leq N}(2^{-\nu \theta }J(2^{\nu },a^{\nu }))^{q}\right)
^{1/q}\lesssim \left\Vert b\right\Vert _{(B_{0},B_{1})_{\theta ,q}},
\end{equation*}%
uniformly in $N$. This implies that $a\in (A_{0},A_{1})_{\theta ,q}$ and 
\begin{equation*}
\left\Vert a\right\Vert _{(A_{0},A_{1})_{\theta ,q}}\lesssim \left\Vert
b\right\Vert _{(B_{0},B_{1})_{\theta ,q}}.
\end{equation*}

From (\ref{Ta-n}), by taking limits along $(n_{k})_{k\geq 1}$ we obtain 
\begin{equation*}
Ta=Tb,
\end{equation*}%
in $F$. This completes the proof of Theorem \ref{th.int}. \hfill $\square $

\bigskip

Theorem \ref{th.BB.g} is only a particular case of Theorem \ref{th.int}
above.

\noindent \textbf{Proof of Theorem \ref{th.BB.g}. }We apply Theorem \ref%
{th.int} to the spaces $A_{0}=B_{0}=X_{0}$, $A_{1}=X_{0}\cap X_{1}$, $%
B_{1}=X_{1}$. The condition (i) in the statement of Theorem \ref{th.int} is
trivially satisfied, while the condition (ii) follows from the assumption on
the operator $T$ in Theorem \ref{th.BB.g}. Theorem \ref{th.int} gives us the
equality 
\begin{equation*}
T((X_{0},X_{0}\cap X_{1})_{\theta ,q})=T(X_{\theta ,q}),
\end{equation*}%
and since 
\begin{equation*}
(X_{0},X_{0}\cap X_{1})_{\theta ,q}\hookrightarrow X_{0}\cap X_{\theta ,q},
\end{equation*}%
we get 
\begin{equation*}
T(X_{\theta ,q})=T((X_{0},X_{0}\cap X_{1})_{\theta ,q})\hookrightarrow
T(X_{0}\cap X_{\theta ,q})\hookrightarrow T(X_{\theta ,q}),
\end{equation*}%
which concludes the proof of Theorem \ref{th.BB.g}. \hfill $\square $

\medskip

The proof of Theorem \ref{th.BB} is now direct:

\noindent \textbf{Proof of Theorem \ref{th.BB}.} We apply Theorem \ref%
{th.BB.g}. Consider the spaces $X_{0}=L^{\infty }$, $X_{1}=X$, $E=H^{-m}$
and $F=H^{-m-1}$. Since $m>d/2$ we have $H^{m}\hookrightarrow L^{\infty
}\hookrightarrow L^{1}$ on $\mathbb{T}^{d}$, by duality we have $L^{\infty
}\hookrightarrow H^{-m}=E$. The remaining conditions are readily verified. \hfill $\square $

\begin{remark}
Note that, in contrast to Lemma 29 in \cite{CE-BB}, we do not require here the $UMD$ property of source or solution spaces. In \cite{CE-BB} this issue was related to the boundedness of the Hilbert transform. Here, however, since we work ``on the other side'' of the Fourier transform it is the point-wise multiplication by the symbol of the Hilbert transform that has to be bounded (which in our case is not a problem).
\end{remark}

\begin{remark}
The condition $q<\infty $ in Theorem \ref{th.int} or Theorem \ref{th.BB.g}
is imposed only for simplicity of the statements. For instance, in the general case $%
q\in[1,\infty] $ the statement of Theorem \ref{th.int} can be modified so that
the source space is the closure of $B_{0}\cap B_{1}$ in  $%
(B_{0},B_{1})_{\theta ,q}$.
\end{remark}

\begin{remark}
Our results are somewhat similar to the results in \cite{Mali} where the
authors obtain solutions of partial differential equations in interpolation
spaces. However, the methods used there and the differential equations
considered are of a different nature. In particular, in \cite{Mali} the
equations are determined (in contrast with the underdetermined nature of the
divergence operator considered here).
\end{remark}

\section{Further remarks}

\subsection{Remarks related to the spaces $(L^{\infty },H^{d/2})_{\theta ,q}$}

\label{s.f.remarks2}

We note first that Corollary \ref{cor.BB} is not trivial, in the sense that
we have the non-embedding (\ref{nontriv}). This immediately follows from the
fact that $H^{d/2}\not\hookrightarrow L^{\infty }$ via the $J-$method of
interpolation. One can construct examples as follows. For each positive
integer $\nu $ there exists $u_{\nu }\in C_{c}^{\infty }(B(0,1))$ with $%
u_{\nu }(0)=1$, such that 
\begin{equation*}
\left\Vert u_{\nu }\right\Vert _{L^{\infty }}\leq 1\text{ \ \ and \ \ }%
\left\Vert u_{\nu }\right\Vert _{H^{d/2}}\leq 2^{-\nu }\text{.}
\end{equation*}

If we consider now the sequence $(f_{n})_{n\geq 1}$, defined by 
\begin{equation*}
f_{n}:=\sum_{\nu =1}^{n}u_{\nu },
\end{equation*}%
we get that (since $J(2^{\nu },u_{\nu })\lesssim 1$) 
\begin{equation*}
\left\Vert f_{n}\right\Vert _{(L^{\infty },H^{d/2})_{\theta ,q}}\leq \left(
\sum_{\nu =1}^{n}(2^{-\nu \theta }J(2^{\nu },u_{\nu }))^{q}\right)
^{1/q}\lesssim 1,
\end{equation*}%
uniformly in $n$ (see \cite[Lemma 3.2.3]{LB}), while $f_{n}(0)\geq n$.

\bigskip Now let us discuss one potential improvement of Corollary \ref%
{cor.BB}. Suppose for simplicity that $1/q=\theta /2$. We have the embedding 
\begin{equation}
X_{\theta ,q}=(L^{\infty },H^{d/2})_{\theta ,q}\hookrightarrow
(BMO,H^{d/2})_{\theta ,q}=B_{q}^{\sigma ,q},  \label{eq-emb}
\end{equation}%
where $\sigma =\theta d/2$. As we have mentioned, we do not have an explicit
easy description of the space $X_{\theta ,q}$. However, instead of computing
the space $X_{\theta ,q}$, one may look to its divergence. It is reasonable
to believe that 
\begin{equation}
\div  (L^{\infty },H^{d/2})_{\theta ,q}=B_{q,\sharp }^{\sigma -1,q}.
\label{eq-div}
\end{equation}%
\bigskip

By duality, this is equivalent to 
\begin{equation}
(G(L^{1}),G(H^{-d/2}))_{\theta ,q^{\prime }}=G((L^{1},H^{-d/2})_{\theta
,q^{\prime }}),  \label{eq-grad}
\end{equation}%
where, for any function space $Y$ on $\mathbb{T}^{d}$, $G(Y)$ is the
subspace of gradients in $Y$:

\begin{equation*}
G(Y):=\{\nabla f\mid f\in \mathcal{D}^{\prime }(\mathbb{T}^{d})\text{, }%
\nabla f\in Y\}\subset Y^{d},
\end{equation*}%
endowed with the norm induced by $Y$.

Indeed, if (\ref{eq-grad}) holds, then by duality we get%
\begin{equation*}
((W^{1,1},H^{-d/2+1})_{\theta ,q^{\prime }}^{\ast })_{\sharp }=\div 
(L^{\infty },H^{d/2})_{\theta ,q},
\end{equation*}%
and in order to get (\ref{eq-div}) it suffices to observe that 
\begin{equation*}
(W^{1,1},H^{-d/2+1})_{\theta ,q^{\prime }}=B_{q^{\prime },\sharp }^{-\sigma
+1,q^{\prime }},
\end{equation*}%
which follows from the results of \cite{CDDV} (see also \cite[Proposition 1.3%
]{CE-1}).

In turn, (\ref{eq-grad}) would be implied by the the $K-$closedness of the
subcouple $(G(L^{1}),G(H^{-d/2}))$ in $(L^{1},H^{-d/2})$. In other words,
one may ask whether or not 
\begin{equation}
K_{t}(f,G(L^{1}),G(H^{-d/2}))\sim K_{t}(f,L^{1},H^{-d/2}),  \label{eq-K}
\end{equation}%
for any $f\in G(L^{1})+G(H^{-d/2})$ and any $t>0$, where, as usual, 
\begin{equation*}
K_{t}(f,Y_{0},Y_{1}):=\inf_{f=f_{0}+f_{1}}(\Vert f_{0}\Vert _{Y_{0}}+t\Vert
f_{1}\Vert _{Y_{1}}),
\end{equation*}%
for any interpolation couple $(Y_{0},Y_{1})$ of function spaces.

There are several results in the literature that concern the $K-$closedness
of similar subcouples. For instance, by a result of Bourgain (\cite[Theorem 3%
]{B}) we have that $(G(L^{1}),G(L^{p}))$ is $K-$closed in $(L^{1},L^{p})$,
for any $p\in (1,\infty )$. More generally, using the methods in \cite{KK}
we have that $(G(L^{1}),G(W^{l,p}))$ is $K-$closed in $(L^{1},W^{l,p})$, for
any $p\in (1,\infty )$ and any non-negative integer $l$. Also, one can
observe that Mazya's result can be formulated as the fact that (\ref{eq-K})
holds for any $0<t\leq 1$.

\bigskip

In the case where (\ref{eq-K}) holds (or (\ref{eq-div}) holds directly),
then, by (\ref{eq-div}) and Corollary \ref{cor.BB} we would get an
improvement of Corollary \ref{cor.BB}:

\begin{corollary}
\label{cor.BB'}Suppose (\ref{eq-div}) holds. For any $\theta \in (0,1)$, and 
$q$ with $1/q=\theta /2$ we have 
\begin{equation*}
\div  (L^{\infty }\cap (L^{\infty },H^{d/2})_{\theta ,q})=B_{q,\sharp
}^{\sigma -1,q}=\div  B_{q}^{\sigma ,q},
\end{equation*}%
where $\sigma =\theta d/2$. In particular, by (\ref{eq-emb}), $B_{q}^{\sigma
,q}$ is a $BB$ space.
\end{corollary}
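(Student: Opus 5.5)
Corollary \ref{cor.BB'} will be derived by chaining three facts: the BB property of $(L^{\infty},H^{d/2})_{\theta,q}$ from Corollary \ref{cor.BB}, the assumed identity (\ref{eq-div}), and the embedding (\ref{eq-emb}). Throughout, $1/q=\theta/2$ and $\sigma=\theta d/2$, so that (\ref{eq-emb}) identifies $(BMO,H^{d/2})_{\theta,q}$ with $B_{q}^{\sigma,q}$.

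\emph{Step 1.} By Corollary \ref{cor.BB}, $X_{\theta,q}=(L^{\infty},H^{d/2})_{\theta,q}$ is a $BB$ space, so $\div(L^{\infty}\cap X_{\theta,q})=\div X_{\theta,q}$. By the hypothesis (\ref{eq-div}), $\div X_{\theta,q}=B_{q,\sharp}^{\sigma-1,q}$. Together these give the first equality.

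\emph{Step 2.} The second equality $\div B_{q}^{\sigma,q}=B_{q,\sharp}^{\sigma-1,q}$ is proved by two inclusions.
\begin{itemize}
\item[$\subseteq$:] $\partial_j$ maps $B_{q}^{\sigma,q}$ boundedly into $B_{q}^{\sigma-1,q}$ (a Fourier multiplier of order one on each Littlewood--Paley block), and every divergence has zero mean.
\item[$\supseteq$:] Given $f\in B_{q,\sharp}^{\sigma-1,q}$, set $u=\nabla\Delta^{-1}f$, which is well defined because $\widehat f(0)=0$. Its symbol $-i\xi/|\xi|^2$ has order $-1$ away from the origin, so $u\in B_{q}^{\sigma,q}$ and $\div u=f$.
\end{itemize}
Alternatively, the inclusion $\supseteq$ also follows from Step 1, since $L^{\infty}\cap X_{\theta,q}\hookrightarrow X_{\theta,q}\hookrightarrow B_{q}^{\sigma,q}$ by (\ref{eq-emb}).

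\emph{Step 3 (BB property of $B_{q}^{\sigma,q}$).} Take a vector field $v\in B_{q}^{\sigma,q}$. By Step 2, $\div v\in B_{q,\sharp}^{\sigma-1,q}$. By Step 1, there is $u\in L^{\infty}\cap X_{\theta,q}$ with $\div u=\div v$. By (\ref{eq-emb}), this $u$ lies in $L^{\infty}\cap B_{q}^{\sigma,q}$, which is exactly the $BB$ property.

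If quantitative control is wanted, so that the equalities hold in the sense of embeddings of images with norm bounds, it comes from the norm-equivalent form of (\ref{eq-div}). That form follows from the closed graph or open mapping theorem applied to the quotient norms.

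The only point that is not bookkeeping is the embedding (\ref{eq-emb}). Its first part is monotonicity of real interpolation, using $L^{\infty}\hookrightarrow BMO$. Its second part is the known identification $(BMO,H^{d/2})_{\theta,q}=B_{q}^{\sigma,q}$ for $1/q=\theta/2$, which I would quote from the literature rather than reprove. Beyond this, the corollary has no real obstacle: its substance lies entirely in the assumption (\ref{eq-div}).
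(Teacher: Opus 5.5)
Your proposal is correct and follows the paper's argument. The first equality comes from chaining Corollary \ref{cor.BB} with the hypothesis (\ref{eq-div}). The $BB$ property of $B_{q}^{\sigma,q}$ then follows by carrying the solution $u\in L^{\infty}\cap(L^{\infty},H^{d/2})_{\theta,q}$ into $L^{\infty}\cap B_{q}^{\sigma,q}$ via (\ref{eq-emb}). The paper leaves the identity $\div B_{q}^{\sigma,q}=B_{q,\sharp}^{\sigma-1,q}$ implicit, and your $\nabla\Delta^{-1}$ multiplier argument (or your alternative via (\ref{eq-emb})) fills it in correctly.
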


\subsection{More exotic $BB$ spaces}

Let us start by introducing a new space. We denote by $S_{1}L^{\infty }$ the
space of those distributions $f$ on the torus for which the norm 
\begin{equation*}
\left\Vert f\right\Vert _{S_{1}L^{\infty }}:=\left\Vert \sum_{k\geq
0}\left\vert P_{k}f\right\vert \right\Vert _{L^{\infty }},
\end{equation*}%
is finite, where $P_{k}$ are the usual Littlewood-Paley projections. 

Consider also the space the space $\tilde{F}_{\infty }^{0,1}$ defined as the
completion of $C^{\infty }$ with respect to the norm%
\begin{equation*}
\left\Vert f\right\Vert _{\tilde{F}_{\infty }^{0,1}}:=\inf \left\{ \left.
\left\Vert\sup_{k\geq 0} |f_{k}|\right\Vert _{L^{1}}\right\vert \text{ }%
f=\sum_{k\geq 0}P_{k}f_{k}, \text{ with } (\left\Vert f_{k}\right\Vert _{L^{\infty}})_{k\geq 0}\in c_{0}\right\}.
\end{equation*}

One can check that $\tilde{F}_{\infty }^{0,1}$ is separable and $(\tilde{F}%
_{\infty }^{0,1})^{\ast }=S_{1}L^{\infty }$. The spaces $S_{1}L^{\infty }$
and $\tilde{F}_{\infty }^{0,1}$ are not Triebel-Lizorkin spaces (in
particular, $\tilde{F}_{\infty }^{0,1}$ is not the Triebel-Lizorkin space $%
F_{\infty }^{0,1}$; see for instance the discussion in \cite[Section 3.5.2]%
{S-T}). Note that we trivially have $S_{1}L^{\infty }\hookrightarrow
L^{\infty }$. Hence, the following is an improvement of Mazya's result:

\begin{proposition}
\label{prop.SL}We have 
\begin{equation*}
\div  (S_{1}L^{\infty }\cap H^{d/2})=\div  H^{d/2}=H_{\sharp
}^{d/2-1}.
\end{equation*}
\end{proposition}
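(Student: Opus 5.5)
The plan is to run Mazya's duality argument for $H^{d/2}$, with $L^{\infty}$ replaced by $S_{1}L^{\infty}$ and $L^{1}$ replaced by its predual-type space $\tilde{F}_{\infty }^{0,1}$. The inclusions $\div (S_{1}L^{\infty }\cap H^{d/2})\subset \div H^{d/2}\subset H_{\sharp}^{d/2-1}$ are trivial. Equality $\div H^{d/2}=H_{\sharp }^{d/2-1}$ follows by taking $v=\nabla \Delta ^{-1}f$ on the torus. So it suffices to show: for every $f\in H_{\sharp }^{d/2-1}$ there is a vector field $u\in S_{1}L^{\infty }\cap H^{d/2}$ with $\div u=f$ and $\Vert u\Vert \lesssim \Vert f\Vert _{H^{d/2-1}}$.

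By the Hahn--Banach/closed range principle, this surjectivity is equivalent to an a priori estimate for the adjoint. That adjoint is $-\nabla $, acting from $H_{\sharp }^{1-d/2}$ into the predual $\tilde{F}_{\infty }^{0,1}+H^{-d/2}$ of $S_{1}L^{\infty }\cap H^{d/2}$. I would use the duality $(\tilde{F}_{\infty }^{0,1})^{\ast }=S_{1}L^{\infty }$ stated above and the separability of $\tilde{F}_{\infty }^{0,1}$, so that the dual of the sum is the intersection. Concretely, the goal is
\begin{equation*}
\Vert \varphi \Vert _{H^{1-d/2}}\lesssim \Vert \nabla \varphi \Vert _{\tilde{F}_{\infty }^{0,1}+H^{-d/2}}
\end{equation*}
for mean-zero smooth $\varphi $. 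A weak-$\ast $ compactness argument (Banach--Alaoglu, as in Step III of the proof of Theorem \ref{th.int}) then produces the solution $u$ in the dual space.

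To prove the estimate, write $\nabla \varphi =g+h$ with $g\in \tilde{F}_{\infty }^{0,1}$ and $h\in H^{-d/2}$. The $h$ part is harmless, since $\Vert \Delta ^{-1}\div h\Vert _{H^{1-d/2}}\lesssim \Vert h\Vert _{H^{-d/2}}$. For the $g$ part, the key is to show $\tilde{F}_{\infty }^{0,1}\hookrightarrow H^{-d/2}$; this is the analogue of $L^{1}\hookrightarrow H^{-d/2}$, which fails at the endpoint and is exactly why Mazya works with gradients. So the real claim is rather
\begin{equation*}
\Vert \Delta ^{-1}\div g\Vert _{H^{1-d/2}}\lesssim \Vert g\Vert _{\tilde{F}_{\infty }^{0,1}}
\end{equation*}
for $g$ with $\div g=\Delta \varphi -\div h$, that is, the curl-free structure must be exploited.

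Following Mazya, I would test against $H^{d/2-1}$ functions and use the Gagliardo--Nirenberg/Sobolev type bound
\begin{equation*}
\Vert \nabla \psi \Vert _{S_{1}L^{\infty }+\ldots }\lesssim \Vert \psi \Vert _{H^{d/2}}.
\end{equation*}
More precisely, I would aim to prove the dual inequality: for $w\in H^{d/2}$ (scalar), the gradient field $\nabla \Delta ^{-1}$-type correction lies in $S_{1}L^{\infty }$. Alternatively, I would use the trick in \cite{Maz} of the identity $\int |\nabla \varphi |^{2}$ paired against $\varphi $ itself. Pairing $\nabla \varphi =g+h$ with $\nabla \Delta ^{-1}\varphi _{\ast }$, where $\varphi _{\ast }$ is the $H^{1-d/2}$-dual extremal, requires controlling $\Vert \sum _{k}|P_{k}\nabla \Delta ^{-1}\varphi _{\ast }|\Vert _{L^{\infty }}$. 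This fails for general $\varphi _{\ast }\in H^{d/2-1}$, so one must use a nonlinear choice, as Mazya does via $\nabla \varphi $ itself (the $L^{2}$ inner product $\langle \nabla \varphi ,\nabla \varphi \rangle $ splits into frequency pieces).

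I expect the main obstacle to be this endpoint estimate: verifying that the Littlewood--Paley square-sum-free norm $\sum _{k}|P_{k}\cdot |$ in $L^{\infty }$ still pairs correctly with the Mazya extremal. My first attempt would be to prove the gradient estimate frequency by frequency, $\Vert \varphi \Vert _{H^{1-d/2}}^{2}\lesssim \sum _{k}2^{k(2-d)}\Vert P_{k}\varphi \Vert _{2}^{2}$. I would then bound each term by $\langle P_{k}g,\,P_{k}\tilde{\varphi}_{k}\rangle $ with $\Vert \tilde{\varphi}_{k}\Vert _{\infty }$ uniformly bounded, using Bernstein's inequality $\Vert P_{k}F\Vert _{\infty }\lesssim 2^{kd/2}\Vert P_{k}F\Vert _{2}$. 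I would sum using the $\sup _{k}$ structure in the $\tilde{F}_{\infty }^{0,1}$ norm, matching the $\ell ^{1}$-in-$k$ structure of $S_{1}L^{\infty }$.
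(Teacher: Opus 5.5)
Your reduction is correct and matches the paper's. The trivial inclusions, $\div H^{d/2}=H^{d/2-1}_{\sharp}$, and the duality $(\tilde{F}^{0,1}_{\infty}+H^{-d/2})^{\ast}=S_{1}L^{\infty}\cap H^{d/2}$ reduce the proposition to the a priori bound $\Vert \varphi\Vert_{H^{1-d/2}}\lesssim \Vert \nabla\varphi\Vert_{\tilde{F}^{0,1}_{\infty}+H^{-d/2}}$. But this bound is the entire content of the proposition, and the proposal does not prove it. Most of the routes you list either fail by your own account or are not precisely formulated. Your ``real claim'' also uses the wrong constraint. The condition $\div g=\Delta\varphi-\div h$ holds automatically, whereas what $\nabla\varphi=g+h$ actually gives you is $\partial_j g_k-\partial_k g_j=-(\partial_j h_k-\partial_k h_j)\in H^{-d/2-1}$. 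Without that constraint the claim is false: approximate Dirac masses $\delta_{\varepsilon}e_1$ stay bounded in $\tilde{F}^{0,1}_{\infty}$, while $\Vert \Delta^{-1}\partial_1\delta_{\varepsilon}\Vert_{H^{1-d/2}}^{2}\sim\log(1/\varepsilon)$.

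The concrete dyadic plan at the end also cannot close. Take $\tilde{\varphi}_k$ built from $P_k\varphi$, say $\tilde{\varphi}_k\approx 2^{k(2-d)}\nabla\Delta^{-1}P_k\varphi$. Bernstein then gives $\Vert\tilde{\varphi}_k\Vert_{\infty}\lesssim 2^{k(1-d/2)}\Vert P_k\varphi\Vert_{2}$, which is uniformly bounded in $k$. However, to sum $\sum_k\langle P_kg,\tilde{\varphi}_k\rangle$ against $\Vert g\Vert_{\tilde{F}^{0,1}_{\infty}}$ you need the $S_1L^{\infty}$ norm of the whole test field, essentially $\Vert\sum_k|\tilde{\varphi}_k|\Vert_{L^{\infty}}$; this is exactly what the $\sup_k$ in the $\tilde{F}^{0,1}_{\infty}$ norm is dual to. That quantity is only controlled by $\sum_k 2^{k(1-d/2)}\Vert P_k\varphi\Vert_2=\Vert\varphi\Vert_{B^{1-d/2}_{2,1}}$, an $\ell^1$ rather than an $\ell^2$ sum over scales. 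This is the same logarithmic loss that underlies $H^{d/2}\not\hookrightarrow L^{\infty}$, and it is not an artifact of the bookkeeping. A test field depending linearly and boundedly on $\varphi$ would amount to a bounded linear solution operator for $\div$ with values in $L^{\infty}$, and no such operator exists. Indeed, averaging over translations turns it into a Fourier multiplier $m$ with $|m(n)|\gtrsim |n|^{-1}$, and boundedness into $L^{\infty}$ would then force $\sum_{n\neq 0}|n|^{-d}<\infty$.

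What is missing is the device the paper uses to beat this loss, following Mazya's argument as implemented in Lemma 37 of \cite{CE-BB}. The structure of gradients is exploited through Fourier multipliers whose symbols are $\ell$-BB symbols: odd in $\ell$ of the frequency variables, with the critical decay $|n'|^{-\ell-|\alpha|}$ after summing over the remaining variables. The new ingredient is Lemma \ref{Rmult}: kernels of such multipliers lie in $S_{1}L^{\infty}$ with norm $\lesssim C$. This is proved by controlling the Fourier projections of the kernel on symmetrized dyadic boxes, as in Lemma 28 of \cite{CE-BB}. Oddness is the cancellation that removes the logarithm; compare $\sum_{n\neq 0}e^{inx}/n=i(\pi-x)$ on $(0,2\pi)$ with $\sum_{n\neq0}e^{inx}/|n|=-2\log|2\sin(x/2)|$. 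It is the $S_1L^{\infty}$ bound, not mere boundedness of the kernel, that allows pairing with the $\tilde{F}^{0,1}_{\infty}$ component. Your proposal contains neither this lemma nor a substitute for it, so the key estimate, and hence the proposition, remains unproved.
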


The proof of Proposition \ref{prop.SL} follows closely the proof of Lemma 37
in \cite{CE-BB}. Let us mention here the main ingredient.

We need first to adapt the definition of the $\ell $-$BB$ symbols to the
discrete setting that parallels the definition in \cite{CE-BB}. Let $1\leq
\ell \leq d$ be some integers and let $m:\mathbb{Z}^{d}\rightarrow \mathbb{C}
$ be a function. We say that $m$ is an $\ell $-$BB$ symbol on $\mathbb{Z}%
^{d} $ if the following conditions are satisfied:

\begin{itemize}
\item[(i)] there exists a constant $C>0$ such that, in the case $\ell <d$,%
\begin{equation}
\sum_{n^{\prime \prime }\in \mathbb{Z}^{d-\ell }}\left\vert \partial
_{1}^{\alpha _{1}}...\partial _{\ell }^{\alpha _{l}}m(n^{\prime },n^{\prime
\prime })\right\vert \leq \frac{C}{\left\vert n^{\prime }\right\vert ^{\ell
+\left\vert \alpha \right\vert }}\text{, \ }  \label{BB1-tor}
\end{equation}
for all $\alpha =\left( \alpha _{1},...,\alpha _{\ell }\right) \in \left\{
0,1\right\} ^{\ell }$ \ and all $n^{\prime }\in \mathbb{Z}_{+}^{\ell }$,
and, in the case $\ell =d$, 
\begin{equation}
\left\vert \partial _{1}^{\alpha _{1}}...\partial _{d}^{\alpha
_{d}}m(n)\right\vert \leq \frac{C}{\left\vert n\right\vert ^{\ell
+\left\vert \alpha \right\vert }}\text{, \ }  \label{BB2-tor}
\end{equation}%
for all $\alpha =\left( \alpha _{1},...,\alpha _{d}\right) \in \left\{
0,1\right\} ^{d}$ \ and all $n\in \mathbb{Z}_{+}^{d}$;

\item[(ii)] $m$ is an odd function in each of the components $n_{1}$, $n_{2}$%
,....,$n_{\ell }$, i.e.,

\begin{equation*}
m\left( n_{1},...,n_{j-1},-n_{j},n_{j+1}....,n_{d}\right) =-m\left(
n_{1},...,n_{j-1},n_{j},n_{j+1}....,n_{d}\right) \text{, \ }
\end{equation*}%
for all $1\leq j\leq \ell $, and all $n_{1},n_{2},....,n_{d}\in \mathbb{Z}$.
\end{itemize}

In (\ref{BB1-tor}) and (\ref{BB2-tor}) the partial derivative $\partial _{j}$
has the following meaning. If $a:\mathbb{Z}^{d}\rightarrow \mathbb{C}$ is a
function, then, $\partial _{j}a:\mathbb{Z}^{d}\rightarrow \mathbb{C}$ is the
function defined by:%
\begin{equation*}
\partial _{j}a(n_{1},...,n_{d})=a\left(
n_{1},...,n_{j-1},n_{j}+1,n_{j+1}....,n_{d}\right) -a\left(
n_{1},...,n_{j-1},n_{j},n_{j+1}....,n_{d}\right) ,
\end{equation*}%
for all $n_{1},n_{2},....,n_{d}\in \mathbb{Z}$.

\bigskip

With minor modifications in the proof of \cite[Lemma 28]{CE-BB} one can
obtain the following:

\begin{lemma}
\label{Rmult}Let $d\geq 1$ be an integer and let $m:\mathbb{R}%
^{d}\rightarrow \mathbb{C}$ be an $\ell $-$BB$ symbol for some $1\leq \ell
\leq d$ and some constant $C$. Then, $K\in S_{1}L^{\infty }$ and $\left\Vert
K\right\Vert _{S_{1}L^{\infty }}\lesssim C$, where $K$ is the inverse
Fourier transform of $m$ (i.e., $\widehat{K}:=m$).
\end{lemma}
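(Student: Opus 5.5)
The plan is to bound $\sum_{k\ge 0}|P_kK(x)|$ pointwise and uniformly in $x\in\mathbb{R}^d$. Throughout, I read the $\ell$-$BB$ condition in its Euclidean form from \cite{CE-BB}: the sums over $n''$ are replaced by integrals $\int_{\mathbb{R}^{d-\ell}}|\partial_1^{\alpha_1}\cdots\partial_\ell^{\alpha_\ell}m(\xi',\xi'')|\,d\xi''\le C|\xi'|^{-\ell-|\alpha|}$ for $\xi'\in(0,\infty)^\ell$, with genuine derivatives and oddness in $\xi_1,\dots,\xi_\ell$.

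\textbf{Reduction to the positive cone.} Write $P_kK(x)=\int\psi_k(\xi)m(\xi)e^{ix\cdot\xi}\,d\xi$. I choose the Littlewood--Paley cutoffs $\psi_k$ radial, hence even in each variable. Oddness of $m$ in $\xi_1,\dots,\xi_\ell$ then lets me symmetrize each of these variables, which gives
\begin{equation*}
P_kK(x)=(2i)^\ell\int_{(0,\infty)^\ell}\int_{\mathbb{R}^{d-\ell}}\psi_k(\xi)\,m(\xi)\prod_{j\le\ell}\sin(x_j\xi_j)\,e^{ix''\cdot\xi''}\,d\xi''\,d\xi'.
\end{equation*}
The factors $\sin(x_j\xi_j)$ supply the cancellation that compensates the non-integrable size $|\xi'|^{-\ell}$ near the origin in $\xi'$. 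Note that $e^{ix''\cdot\xi''}$ is simply bounded by $1$: the $\xi''$ variables are controlled only through the integrated hypothesis.

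\textbf{Splitting each coordinate.} For each $j\le\ell$ I split the $\xi_j$-integral with a smooth partition $1=\eta(|x_j|\xi_j)+(1-\eta)(|x_j|\xi_j)$, where $\eta$ is supported in $[0,2]$. This produces $2^\ell$ terms, indexed by the set $J\subset\{1,\dots,\ell\}$ of coordinates lying in the ``high'' region $\xi_j\gtrsim 1/|x_j|$.
\begin{itemize}
\item On low coordinates I use $|\sin(x_j\xi_j)|\le|x_j|\xi_j$.
\item On each high coordinate $j\in J$ I integrate by parts once in $\xi_j$, writing $\sin(x_j\xi_j)=-\partial_{\xi_j}\cos(x_j\xi_j)/x_j$. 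Thus $\partial_j$ lands on $m$, on $\psi_k$, or on the cutoff $1-\eta(|x_j|\cdot)$.
\end{itemize}
The boundary terms vanish: the cutoff vanishes near $\xi_j=|x_j|^{-1}$, and $m\psi_k$ is compactly supported in $\xi$.

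\textbf{Estimating each term.} After the integration by parts, each term is bounded by an integral against $\int|\partial^\alpha m|\,d\xi''$ with $\alpha=1_J$ (or $\alpha$ smaller, when the derivative falls on $\psi_k$ or $\eta$). The three possible targets of $\partial_j$ behave as follows.
\begin{itemize}
\item If $\partial_j$ falls on $m$, the hypothesis gives the extra factor $|\xi'|^{-1}$.
\item If it falls on $\psi_k$, I gain $2^{-k}\sim|\xi|^{-1}\le|\xi'|^{-1}$ on the support.
\item If it falls on the cutoff $1-\eta(|x_j|\cdot)$, I gain $|x_j|$, but the support is then restricted to $\xi_j\sim|x_j|^{-1}$, which is equivalent to the factor $\xi_j^{-1}$.
\end{itemize}
In all cases I arrive at the bound
\begin{equation*}
|P_kK(x)|\lesssim C\int_{(0,\infty)^\ell}\widetilde\psi_k(\xi')\,|\xi'|^{-\ell}\prod_{j\notin J}\min(1,|x_j|\xi_j)\prod_{j\in J}\min\Bigl(1,\frac{1}{|x_j|\xi_j}\Bigr)\,\frac{|\xi'|^{?}}{\xi_j}\,d\xi' .
\end{equation*}
To close this estimate I use $|\xi'|^{-1}\le\xi_j^{-1}$ for the derivative on $m$. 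Summing over $k$ is harmless: after the integration by parts every estimate is in absolute value, and the supports of the $\psi_k$ (and of $\partial\psi_k$) have bounded overlap. Hence $\sum_k|P_kK(x)|$ is bounded by a single integral of the same shape without $\widetilde\psi_k$.

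\textbf{Main obstacle.} The crux is showing that this final integral,
\begin{equation*}
\int_{(0,\infty)^\ell}|\xi'|^{-\ell}\prod_{j\notin J}\min(1,|x_j|\xi_j)\prod_{j\in J}\frac{\min(1,(|x_j|\xi_j)^{-1})\,|\xi'|}{\xi_j}\cdots d\xi',
\end{equation*}
is bounded independently of $x$. The bookkeeping of the powers must come out exactly scale-invariant. My first attempt is to bound $|\xi'|^{-\ell}\le\prod_j\xi_j^{-1}$, which is available because $|\xi'|\ge\xi_j$ for every $j$. This factorizes the integral into one-dimensional integrals:
\begin{itemize}
\item for $j\notin J$, the integral $\int_0^\infty\min(1,|x_j|\xi_j)\,\xi_j^{-1}\,d\xi_j$ restricted to $\xi_j\lesssim|x_j|^{-1}$, which is $O(1)$;
\item for $j\in J$, the integral $\int_{|x_j|^{-1}}^\infty(|x_j|\xi_j)^{-1}\xi_j^{-1}\,d\xi_j\cdot|x_j|^{-1}\cdot|x_j|$, which is $O(1)$.
\end{itemize}
If this split turns out to be too lossy, the fallback is to follow the proof of \cite[Lemma 28]{CE-BB} directly, which handles the same product structure, adding only the observation that absolute values were taken before summing over $k$.
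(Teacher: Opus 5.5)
Your argument is correct in the Euclidean reading you adopt, and it takes a more direct route than the paper. The paper gives no self-contained proof. It appeals to the proof of \cite[Lemma 28]{CE-BB}, which (see the remark after the lemma) decomposes the symbol over symmetrized dyadic boxes in the first $\ell$ variables, on $\mathbb{Z}^d$. It controls all the corresponding Fourier projections of $K$ at once, and the Littlewood--Paley bound is one consequence of that stronger statement. You bypass the box decomposition. Each $\xi_j$ is split once at the $x$-adapted scale $1/|x_j|$: the sine bound handles the part below, and one integration by parts handles the part above (this is exactly where the mixed derivatives $\alpha\in\{0,1\}^\ell$ enter). The crude inequality $|\xi'|^{-\ell-|\beta|}\le\prod_{i\le\ell}\xi_i^{-1}\prod_{j\in\beta}\xi_j^{-1}$ then factorizes everything into one-dimensional integrals. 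The mechanisms are those of a box-by-box estimate: a box at scales $2^{k_j}$ gives $\prod_j\min(2^{k_j}|x_j|,(2^{k_j}|x_j|)^{-1})$, and summing over the $k_j$ reproduces your integrals. Your version is self-contained and aimed exactly at $\sup_x\sum_k|P_kK(x)|$, using only that the $\psi_k$ are even in each coordinate. The box version buys the more general statement.

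Three points need tidying.

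\emph{Your bookkeeping already closes.} Your hedge is unnecessary and there is no undetermined exponent. Take $\eta\equiv 1$ on $[0,1]$, which you need anyway for the boundary term at $\xi_j=0$ to vanish. After summing in $k$ and integrating in $\xi''$, each term of the Leibniz expansion is dominated by $C\prod_{j\notin J}|x_j|\prod_{j\in J}(|x_j|\xi_j^{2})^{-1}$. This holds on the region where $\xi_j\le 2/|x_j|$ for $j\notin J$ and $\xi_j\ge 1/|x_j|$ for $j\in J$. The reason is that whether $\partial_j$ hits $m$, $\psi_k$ or the cutoff, it costs $\lesssim\xi_j^{-1}$. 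This integrates to $\lesssim 2^{\ell-|J|}$, so no fallback is needed.

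\emph{Order of summation when $\ell<d$.} Here $\psi_k$ localizes $|\xi|\sim 2^k$, not $|\xi'|$, so no function $\widetilde\psi_k(\xi')$ has bounded overlap in $k$. The $k$-sum must be taken pointwise in $\xi$, via $\sum_k|\partial^\gamma\psi_k(\xi)|\lesssim|\xi|^{-|\gamma|}\le|\xi'|^{-|\gamma|}$, before you invoke the $\xi''$-integrated hypothesis. Your sentence on bounded overlap is right in this order; your display is not.

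\emph{Setting.} The lemma is meant on $\mathbb{T}^d$, with $m$ on $\mathbb{Z}^d$ and the difference operators defined just above it; that is where $S_1L^\infty$ lives. There you replace integration by parts by Abel summation. Use $\sin(x_jn)=\bigl(\cos(x_j(n-\tfrac12))-\cos(x_j(n+\tfrac12))\bigr)/(2\sin(x_j/2))$ together with $|\sin(x_j/2)|\ge|x_j|/\pi$ for $|x_j|\le\pi$. The rest of the argument adapts routinely.
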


\begin{remark}
In fact, what we obtain by following the proof of \cite[Lemma 28]{CE-BB} is
a much more general statement involving the control of Fourier projections
on symmetrizations of dyadic boxes. For the sake of simplicity however, we
only mention here Lemma \ref{Rmult} which is a direct consequence of the
torus version of \cite[Lemma 28]{CE-BB}. (Also, using the torus analogue of
Lemma 37 in \cite{CE-BB} one can obtain versions of Corollary \ref{cor.BB}
for more divergence-like equations as in \cite{CE-BB}.)
\end{remark}

Proposition \ref{prop.SL} can be obtained by following closely the proof of
Lemma 37 in \cite{CE-BB}, with some minor changes: we use Lemma \ref%
{Rmult} above instead of Lemma 37 in \cite{CE-BB}; we use the standard
derivatives ($\partial _{j}$ instead of the multipliers $\partial
_{j}^{\sigma }$); instead of using the space $Y^{\ast }/U$ we use $H^{-d/2}$%
, and instead of $(\mathcal{M}+Y^{\ast })/U$ \ we use \ \ $\tilde{F}_{\infty
}^{0,1}+H^{-d/2}$.

Combining Proposition \ref{prop.SL} and Theorem \ref{th.BB.g} for $%
X_{0}:=S_{1}L^{\infty }$ and $X_{1}:=H^{d/2}$ we get

\begin{corollary}
For any $\theta \in (0,1)$, $q\in \lbrack 1,\infty )$ we have 
\begin{equation*}
\div  (S_{1}L^{\infty }\cap X_{\theta ,q})=\div  X_{\theta ,q},
\end{equation*}%
where $X_{\theta ,q}:=(S_{1}L^{\infty },H^{d/2})_{\theta ,q}$. In
particular, $X_{\theta ,q}$ is a $BB$ space.
\end{corollary}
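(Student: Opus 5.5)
The statement to prove is the final corollary: $\div(S_{1}L^{\infty }\cap X_{\theta ,q})=\div X_{\theta ,q}$ for $X_{\theta ,q}=(S_{1}L^{\infty },H^{d/2})_{\theta ,q}$. I will obtain it as a direct application of Theorem \ref{th.BB.g}, with
\[
X_{0}:=S_{1}L^{\infty },\qquad X_{1}:=H^{d/2},\qquad E:=H^{-m},\qquad F:=H^{-m-1},\qquad T:=\div ,
\]
for a fixed $m>d/2$. With these choices, the hypothesis $T(X_{0}\cap X_{1})=T(X_{1})$ of Theorem \ref{th.BB.g} is exactly Proposition \ref{prop.SL}. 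Granting the remaining hypotheses, Theorem \ref{th.BB.g} gives the desired equality of images. Since $S_{1}L^{\infty }\hookrightarrow L^{\infty }$, we then get $\div (L^{\infty }\cap X_{\theta ,q})\supseteq \div (S_{1}L^{\infty }\cap X_{\theta ,q})=\div X_{\theta ,q}$; the reverse inclusion is trivial, so $X_{\theta ,q}$ is a $BB$ space.

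The hypotheses are checked in the following order.

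\emph{Embeddings into $E$ and the spaces $E$, $F$.} We have $S_{1}L^{\infty }\hookrightarrow L^{\infty }\hookrightarrow H^{-m}$ by the duality argument used in the proof of Theorem \ref{th.BB}, and clearly $H^{d/2}\hookrightarrow H^{-m}$. The space $E=H^{-m}$ is a separable Hilbert space, hence separable and reflexive. The operator $\div :H^{-m}\rightarrow H^{-m-1}$ is bounded, acting componentwise on vector fields; I work throughout with $X^{d}$, which changes nothing.

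\emph{Separable predual of $X_{0}$.} This is the assertion $(\tilde{F}_{\infty }^{0,1})^{\ast }=S_{1}L^{\infty }$ with $\tilde{F}_{\infty }^{0,1}$ separable, stated in the text just before Proposition \ref{prop.SL}.

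\emph{Separable predual of $X_{0}\cap X_{1}$.} Here $X_{0}\cap X_{1}=S_{1}L^{\infty }\cap H^{d/2}$, and the candidate predual is the sum space $\tilde{F}_{\infty }^{0,1}+H^{-d/2}$. Both summands are separable and sit in a common ambient space of distributions, with $C^{\infty }$ dense in their intersection. The standard duality $(Y_{0}\cap Y_{1})^{\ast }=Y_{0}^{\ast }+Y_{1}^{\ast }$ for a couple with dense intersection (\cite[Theorem 2.7.1]{LB}), applied to $Y_{0}=\tilde{F}_{\infty }^{0,1}$, $Y_{1}=H^{-d/2}$, identifies the dual of their intersection, but not the dual of their sum. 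What we need is the companion identity
\[
(\tilde{F}_{\infty }^{0,1}+H^{-d/2})^{\ast }=S_{1}L^{\infty }\cap H^{d/2},
\]
which is the same theorem read with the roles of sum and intersection exchanged. It holds with equivalent norms because both $\tilde{F}_{\infty }^{0,1}$ and $H^{-d/2}$ contain the trigonometric polynomials densely, and each of these spaces has the expected dual. A sum of two separable spaces is separable, so this gives the required separable predual.

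\emph{Main obstacle.} The delicate point is the last step: verifying that $(S_{1}L^{\infty },H^{d/2})$ and $(\tilde{F}_{\infty }^{0,1},H^{-d/2})$ form compatible couples embedded in a common Hausdorff space, and that the dual of the sum really is the intersection with the given norms. I would settle both by working on the Fourier side on $\mathbb{T}^{d}$ and testing against trigonometric polynomials. The same predual $\tilde{F}_{\infty }^{0,1}+H^{-d/2}$ is already used in the proof of Proposition \ref{prop.SL}, which makes me fairly confident the identification goes through.
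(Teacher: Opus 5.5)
Your proposal is correct and follows the paper's own argument. The paper applies Theorem~\ref{th.BB.g} with $X_{0}=S_{1}L^{\infty}$, $X_{1}=H^{d/2}$ (and $E=H^{-m}$, $F=H^{-m-1}$ as in the proof of Theorem~\ref{th.BB}), with Proposition~\ref{prop.SL} supplying $T(X_{0}\cap X_{1})=T(X_{1})$; your check of the separable preduals $\tilde{F}_{\infty}^{0,1}$ and $\tilde{F}_{\infty}^{0,1}+H^{-d/2}$ makes explicit what the paper leaves implicit. One small correction: the identity $(Y_{0}+Y_{1})^{\ast}=Y_{0}^{\ast}\cap Y_{1}^{\ast}$, under density of $Y_{0}\cap Y_{1}$ in both spaces, is itself part of \cite[Theorem 2.7.1]{LB}, so you need no separate ``companion'' result.
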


\end{document}